\newcommand{\sgn}{\mathrm{sgn}}
\newcommand{\const}{\mathrm{const}}
\newcommand{\Tr}{\mathrm{Tr}}
\newcommand{\Arg}{\mathrm{Arg}}
\renewcommand{\Im}{\mathrm{Im}}
\renewcommand{\Re}{\mathrm{Re}}
\newtheorem{lemma}{Lemma}
\newtheorem{theorem}{Theorem}
\newtheorem{statement}{Statement}
\numberwithin{equation}{section}
\numberwithin{lemma}{section}
\numberwithin{theorem}{section}
\numberwithin{statement}{section}
\numberwithin{definition}{section}
\begin{document}


\begin{center}
\large \textbf{Absence of solitons with sufficient algebraic localization for the Novikov-Veselov equation at nonzero energy}
\end{center}

\begin{center}
A.V. Kazeykina \footnote{Centre des Math\'ematiques Appliqu\'ees, Ecole Polytechnique, Palaiseau, 91128, France; \\ email: kazeykina@cmap.polytechnique.fr}
\end{center}

\textbf{Abstract.} We show that the Novikov--Veselov equation (an analog of KdV in dimension $ 2 + 1 $) at positive and negative energies does not have solitons with the space localization stronger than $ O( | x |^{ -3 } ) $ as $ | x | \to \infty $.

\section{Introduction}
In this paper we are concerned with the Novikov-Veselov equation
\begin{subequations}
\label{NV}
\begin{align}
\label{NV_eq}
& \partial_t v = 4 \Re ( 4 \partial_z^3 v + \partial_z( v w ) - E \partial_{ z } w ), \\
\label{w_def}
& \partial_{ \bar z } w = - 3 \partial_z v, \quad v = \bar v, \quad E \in \mathbb{R},\\
\label{misc}
& v = v( x, t ), \quad w = w( x, t ), \quad x = ( x_1, x_2 ) \in \mathbb{R}^2, \quad t \in \mathbb{R},
\end{align}
\end{subequations}
where the following notations are used
\begin{equation}
\label{derivatives}
\partial_t = \frac{ \partial }{ \partial t }, \quad \partial_z = \frac{ 1 }{ 2 } \left( \frac{ \partial }{ \partial x_1 } - i \frac{ \partial }{ \partial x_2 } \right), \quad \partial_{ \bar z } = \frac{ 1 }{ 2 } \left( \frac{ \partial }{ \partial x_1 } + i \frac{ \partial }{ \partial x_2 } \right).
\end{equation}

Equation (\ref{NV}) is mathematically the most natural $ ( 2 + 1 ) $-dimensional analog of the classic Korteweg-de Vries equation. When $ v = v( x_1, t ) $, $ w = w( x_1, t ) $, equation (\ref{NV}) reduces to KdV. Besides, equation (\ref{NV}) is integrable via the scattering transform for the $ 2 $--dimensional Schr\"odinger equation
\begin{equation}
\label{t_schrodinger}
\begin{aligned}
L \psi = E \psi, & \quad  E = E_{fixed}, \\
L = - \Delta + v( x, t ), & \quad \Delta = 4 \partial_z \partial_{ \bar z }, \quad x \in \mathbb{R}^2.
\end{aligned}
\end{equation}
Note also that tending $ E \to \pm \infty $ in (\ref{NV}) yields another renowned $ ( 2 + 1 ) $-dimensional analog of KdV, Kadomtsev-Petviashvili equation (KP-I and KP-II, respectively).

Equation (\ref{NV}) is contained implicitly in \cite{M} as an equation possessing the following representation
\begin{equation}
\label{L-A-B}
\frac{ \partial( L - E ) }{ \partial t } = [ L - E, A ] + B( L - E ),
\end{equation}
where $ L $ is the operator of the corresponding scattering problem, $ A $, $ B $ are some appropriate differential operators and $ [ \cdot, \cdot ] $ denotes the commutator. For the particular case of the $ 2 $-dimensional Schr\"odinger operator as in (\ref{t_schrodinger}) the following explicit form of $ A $ and $ B $
\begin{equation}
\label{A-B}
\begin{array}{l}
A = - 8 \partial_{ z }^3 - 2 w \partial_{ z } - 8 \partial_{ \bar z }^3 - 2 \bar w \partial_{ \bar z }, \\
B = 2 \partial_{ \bar z } w + 2 \partial_{ \bar z } \bar w,
\end{array} \text{ where } w \text{ is defined via (\ref{w_def}) },
\end{equation}
and the corresponding evolution equation (\ref{NV}) were given in \cite{NV1}, \cite{NV2}, where equation (\ref{NV}) was also studied in the periodic setting.

Solitons and the large time asymptotic behavior of sufficiently localized in space solutions for the Novikov-Veselov equation were studied in the series of works \cite{GN1, G1, Nov2, K1, KN1, KN2, KN3}. In \cite{KN1, K1} it was shown that in the regular case, i.e. when the scattering data are nonsingular at fixed nonzero energy (and for the reflectionless case at positive energy), then these solutions do not contain isolated solitons in the large time asymptotics. In the general case it was shown in \cite{Nov2}, \cite{KN3} that the Novikov-Veselov equation at nonzero energy does not admit exponentially localized solitons. A family of algebraically localized solitons for the Novikov-Veselov equation at positive energy was constructed in \cite{G1} (see also discussion in \cite{KN2}). These solitons are rational functions decaying as $ O\left( | x |^{ - 2 } \right) $ when $ | x | \to \infty $.

Note that KP-I equation possesses soliton solutions and these solutions decay as $ O\left( | x |^{ - 2 } \right) $ when $ | x | \to \infty $. By contrast, KP-II does not possess localized soliton solutions. For the results on existence and nonexistence of localized soliton solutions of KP-I, KP-II and their generalized versions see \cite{BS1}; the symmetry properties and the decay rates of these solutions were derived in \cite{BS2}. For more results on integrable $ ( 2 + 1 ) $-dimensional systems admitting localized soliton solutions, see \cite{AC}, \cite{BLMP2}, \cite{FA}, \cite{FS} and references therein.

In this paper we are concerned with regular, sufficiently localized solutions of (\ref{NV}) satisfying the following conditions
\begin{align}
\label{smoothness}
& \bullet v, w \in C( \mathbb{R}^2 \times \mathbb{R} ), \; v( \cdot, t ) \in C^4( \mathbb{R}^2 ) \quad \forall t \in \mathbb{R}; \\
\label{decrease}
& \bullet | \partial_{ x }^{ j } v( x, t ) | \leqslant \frac{ q( t ) }{ ( 1 + | x | )^{ 3 + j + \varepsilon } }, \; j = ( j_1, j_2 ) \in ( \mathbb{N} \cup 0 )^2, \; j_1 + j_2 \leqslant 4, \text{ for some } q( t ) > 0, \varepsilon > 0; \\
\label{w_decrease}
& \bullet | w( x, t ) | \to 0, \text{ when } | x | \to \infty, \quad t \in \mathbb{R}.
\end{align}
We say that a solution of (\ref{NV}) is a soliton if $ v( x, t ) = V( x - ct ) $ for some $ c = ( c_1, c_2 ) \in \mathbb{R}^2 $. The main result of this paper consists in the following theorem.

\begin{theorem}
\label{main_theorem}
Let $ (v, w) $ be a soliton solution of (\ref{NV}) with $ E \neq 0 $ satisfying properties (\ref{smoothness})-(\ref{w_decrease}). Then $ v \equiv 0 $, $ w \equiv 0 $.
\end{theorem}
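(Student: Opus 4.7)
The plan is to analyse the soliton via the fixed-energy inverse scattering transform for the two-dimensional Schr\"odinger operator $L = -\Delta + v$, extending to the algebraic setting the arguments used in \cite{Nov2, KN3} for the exponentially-localized case. The decay threshold $O(|x|^{-3-\varepsilon})$ in (\ref{decrease}) is chosen precisely so that the Faddeev exponentially-growing eigenfunctions $\psi(x,k)$ of $L$ are well-defined for $k$ on the relevant spectral manifold (a real two-dimensional subset of $\mathbb{C}^2$ depending on $\sgn E$) and so that the associated fixed-energy scattering function $h(k,\cdot,t)$ is continuous in its spectral argument and decays algebraically at infinity.

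Given this setup, two a priori distinct evolution rules for $h$ must coincide. On the one hand, the $(L,A,B)$-representation (\ref{L-A-B})--(\ref{A-B}) together with a direct computation yields the NV flow on scattering data
\begin{equation*}
h(k,\cdot,t) = e^{it\Phi_E(k,\cdot)}\, h(k,\cdot,0),
\end{equation*}
where $\Phi_E$ is an explicit polynomial phase of degree three in the components of the spectral parameter on the spectral set. On the other hand, the soliton ansatz $v(x,t) = V(x-ct)$ makes $L(t)$ the translate of $L(0)$, so its Faddeev eigenfunctions satisfy $\psi(x,k,t) = e^{i k\cdot ct}\psi(x-ct,k,0)$; extracting $h$ from the asymptotics of $\psi$ yields a second rule $h(k,\cdot,t) = e^{it\Psi_c(k,\cdot)}\, h(k,\cdot,0)$ with a phase $\Psi_c$ affine in the spectral parameter.

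Comparing the two rules forces $h(\cdot,0)$ to be supported on the zero set of $\Phi_E - \Psi_c$. Since $E \neq 0$, this difference does not vanish identically on the spectral set (the cubic contribution of $\Phi_E$ cannot be absorbed by the affine $\Psi_c$), so its zero locus has positive codimension, and the continuity of $h$ from the first step forces $h(\cdot,0) \equiv 0$. Applying the inverse scattering reconstruction to the vanishing data yields $v \equiv 0$, and then $w \equiv 0$ follows from (\ref{w_def}) together with (\ref{w_decrease}).

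The main obstacle will be the first step at this precise level of algebraic localization. At $O(|x|^{-2})$ decay the rational solitons from \cite{G1} show that the theorem is false, so $|x|^{-3-\varepsilon}$ sits at the delicate border where the Faddeev inverse problem becomes nondegenerate. Carefully ruling out (or handling) exceptional spectral points where the $\bar\partial$-type equation of the inverse problem fails to be uniquely solvable, and proving the continuity and decay of $h$ needed to close the vanishing argument above, will be the technical heart of the proof.
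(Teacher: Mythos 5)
Your first step is essentially the paper's first step: comparing the NV evolution of the Faddeev scattering data $b(\lambda,t)$ (cubic phase, Lemma \ref{dyn_lemma}) with the translation law coming from the soliton ansatz (affine phase, Lemma \ref{shift_lemma}) and using linear independence of $\lambda^{\pm 3},\bar\lambda^{\pm 3},\lambda^{\pm 1},\bar\lambda^{\pm 1},1$ to conclude $b\equiv 0$ off the exceptional set. (You should also note that for $E>0$ one first needs transparency of the soliton, Lemma \ref{scat_lemma}, before the $\bar\partial$-data $b$ and the continuity of $\Delta$ across $T$ are even available.)

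The genuine gap is your final sentence: ``applying the inverse scattering reconstruction to the vanishing data yields $v\equiv 0$.'' This does not follow. Vanishing of $b$ only kills the continuous ($\bar\partial$-) part of the scattering data; it does not exclude nonzero reflectionless potentials encoded entirely in the discrete data attached to the exceptional set $\mathcal{E}=\{\Delta=0\}$ (the fixed-energy analogues of multisoliton potentials of Grinevich--Novikov, of which the rational solitons of \cite{G1} are examples). You flag ``ruling out exceptional spectral points'' as the technical heart, but you propose no mechanism for it, and the $b$-comparison alone provides none. The paper's actual new ingredient is a second family of eigenfunctions $\varphi$ with linear growth $e^{ikx}(k_1x_2-k_2x_1+o(1))$ and the associated data $\alpha,\beta$ of (\ref{a_data})--(\ref{beta_data}). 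For these, both the soliton translation law (\ref{alpha_eta}) and the NV evolution (\ref{alpha_evolution}) are \emph{affine in $t$} rather than pure phases, and equating them yields the explicit rational formula (\ref{key_expression}) for $a(\lambda,0)$ in terms of $\hat v(0)$ and $c$. Feeding this into the $\bar\partial$-equation (\ref{dif_det}) for the modified Fredholm determinant, using its reality, its symmetry $\Delta(\lambda)=\Delta(-(\sgn E)/\bar\lambda)$, the fact that the denominator of (\ref{key_expression}) has roots on $T$, and an integration of $\partial_{\bar\lambda}\ln\Delta$ along a radius to exclude $\Delta\equiv 0$ on $T$, one forces $\hat v(0)=0$, hence $a\equiv 0$, hence $\Delta$ holomorphic and $\equiv 1$, hence $\mathcal{E}=\varnothing$; only then does Liouville's theorem give $\mu\equiv 1$ and $v\equiv 0$. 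Without the $\alpha,\beta$ data (or some substitute controlling $a$ and $\mathcal{E}$), your argument cannot close.
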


To prove this result we consider, in particular, special eigenfunctions of the $ 2 $-dimensional Schr\"odinger operator going back to \cite{F1}, \cite{BLMP1} and we base our reasoning on the ideas proposed in \cite{Nov2}.

Note that Theorem \ref{main_theorem} for the case of zero energy was proved in \cite{K2} for the potentials of conductivity type.

In Section \ref{schr_section} we recall, in particular, some known notions and results from the direct and inverse scattering theory for the two-dimensional Schr\"odinger equation at nonzero energy (see \cite{Nov1}, \cite{G2} and references therein). In addition, we introduce nonzero energy analogs of some ``scattering data'' going back to \cite{BLMP1}. The main result (namely, Theorem \ref{main_theorem}) is proved in Section \ref{main_section}. Section \ref{lemmas_section} contains the proofs of some preliminary lemmas.

This work was fulfilled in the framework of research carried out under the supervision of R.G. Novikov.

\section{Scattering data and inverse scattering equations}
\label{schr_section}

Consider the Schr\"odinger equation on the plane
\begin{equation}
\label{schrodinger}
\begin{aligned}
L \psi = E \psi, & \quad E = E_{ fixed } \in \mathbb{R} \backslash 0, \\
L = - \Delta + v, \quad \Delta & = 4 \partial_z \partial_{ \bar z }, \quad v = v( x ), \quad x \in \mathbb{R}^2
\end{aligned}
\end{equation}
with a potential $ v $ satisfying the following conditions
\begin{equation}
\label{v_conditions}
\begin{aligned}
& v( x ) = \overline{ v( x ) }, \quad v( x ) \in L^{ \infty }( \mathbb{R}^2 ), \\
& | \partial_{ x_1 }^{ j_1 } \partial_{ x_2 }^{ j_2 } v( x ) | < q ( 1 + | x | )^{ - 3 - \varepsilon } \text{ for some } q > 0, \; \varepsilon > 0, \text{ where } j_1, j_2 \in \mathbb{N} \cup 0, \; j_1 + j_2 \leqslant 3.
\end{aligned}
\end{equation}

For equation (\ref{schrodinger}) with $ E > 0 $ we consider its classical scattering eigenfunctions $ \psi^+( x, k ) $, defined for $ k \in \mathbb{R}^2 $, $ k^2 = E $ and specified by
\begin{equation}
\label{scat_eigenfunction}
\psi^+( x, k ) = e^{ i k x } - i \pi \sqrt{ 2 \pi } e^{ -\frac{ i \pi }{ 4 } } f\left( k, | k | \frac{ x }{ | x | } \right) \frac{ e^{ i | k | | x | } }{ \sqrt{ | k | | x | } } + o\left( \frac{ 1 }{ \sqrt{ | x | } } \right), \quad | x | \to \infty,
\end{equation}
with some a priori unknown function $ f $. The function $ f $ is called the scattering amplitude of the potential $ v $. If $ f( k, l ) \equiv 0 $ for $ k, l \in \mathbb{R}^2 $, $ k^2 = l^2 = E $, then the corresponding potential is called transparent (or reflectionless) at fixed energy $ E > 0 $. In this paper we will only be concerned with transparent potentials since it was shown in \cite{Nov2} that the solitons of the Novikov-Veselov equation at positive energy are transparent potentials (see also Lemma \ref{scat_lemma}).

In addition, for equation (\ref{schrodinger}) with $ E \in \mathbb{R} \backslash 0 $ we consider its Faddeev eigenfunctions $ \psi( x, k ) $, defined for $ k \in \Sigma_E $, where
\begin{align*}
& \Sigma_E = \{ k \in \mathbb{C}^2 \colon k^2 = E, \; \Im k \neq 0 \}, \text{ if } E > 0, \\
& \Sigma_E = \{ k \in \mathbb{C}^2 \colon k^2 = E \}, \text{ if } E < 0, \\
\end{align*}
and specified by
\begin{equation}
\label{faddeev_eigenfunction}
\psi( x, k ) = e^{ i k x }( 1 + o( 1 ) ), \quad | x | \to \infty
\end{equation}
(see\cite{F1}, \cite{Nov1}, \cite{G2}).

Finally, for equation (\ref{schrodinger}) with $ E \in \mathbb{R} \backslash 0 $ we will also consider its eigenfunctions $ \varphi( x, k ) $, defined for $ k \in \Sigma_E $ and specified by
\begin{equation}
\label{blmp_eigenfunction}
\varphi( x, k ) = e^{ i k x }( k_1 x_2 - k_2 x_1 + o( 1 ) ), \quad | x | \to \infty.
\end{equation}
These functions are the analogs of solutions introduced in \cite{BLMP1} for the case of zero energy.

Further it will be convenient to assume without loss of generality that $ E = \pm 1 $ (the general case is reduced to this one by a scaling transform) and to introduce the following new variables:
\begin{equation*}
z = x_1 + i x_2, \quad \lambda = \frac{ k_1 + i k_2 }{ \sqrt{E} }.
\end{equation*}
Note that $ k_1 = \frac{ \sqrt{E} }{ 2 } \left( \lambda + \frac{ 1 }{ \lambda } \right) $, $ k_2 = \frac{ i \sqrt{E} }{ 2 } \left( \frac{ 1 }{ \lambda } - \lambda \right) $.

In the new variables $ z \in \mathbb{C} $, $ \lambda \in \mathbb{C} \backslash 0 $ functions $ \psi $ and $ \varphi $ are solutions of (\ref{schrodinger}) with the following asymptotic behavior
\begin{align}
\label{psi_asympt}
& \psi( z, \lambda ) = e^{ \frac{ i \sqrt{ E } }{ 2 }( \lambda \bar z + z / \lambda ) } \mu( z, \lambda ), \quad \mu( z, \lambda ) = 1 + o( 1 ), \text{ as } | z | \to \infty, \\
\label{phi_asympt}
& \varphi( z, \lambda ) = e^{ \frac{ i \sqrt{ E } }{ 2 }( \lambda \bar z + z / \lambda ) } \nu( z, \lambda ), \quad \nu( z, \lambda ) = \frac{ i \sqrt{ E } }{ 2 } \left(\lambda \bar z - \frac{ 1 }{ \lambda } z \right) + o( 1 ), \text { as } | z | \to \infty.
\end{align}
Functions $ \mu( z, \lambda ) $ and $ \nu( z, \lambda ) $ arising in the above formulas can also be defined as solutions of the following integral equations
\begin{align}
\label{mu_int_equation}
& \mu( z, \lambda ) = 1 + \iint\limits_{ \mathbb{C} } g( z - \zeta, \lambda ) v( \zeta ) \mu( \zeta, \lambda ) d \Re \zeta d \Im \zeta, \\
\label{nu_int_equation}
& \nu( z, \lambda ) = \frac{ i \sqrt{E} }{ 2 } \left( \lambda \bar z - \frac{ 1 }{ \lambda } z \right) + \iint\limits_{ \mathbb{C} } g( z - \zeta, \lambda ) v( \zeta ) \nu( \zeta, \lambda ) d \Re \zeta d \Im \zeta, \text{ where } \\
\label{green}
& g( z, \lambda ) = - \left( \frac{ 1 }{ 2 \pi } \right)^2 \iint\limits_{ \mathbb{C} } \frac{ e^{ \frac{ i }{ 2 } ( p \bar z + \bar p z ) } }{ p \bar p + \sqrt{E} ( \lambda \bar p + p / \lambda ) } d \Re p  d \Im p,
\end{align}
where $ z \in \mathbb{C} $, $ \lambda \in \mathbb{C} \backslash 0 $ and, if $ E > 0 $, then $ | \lambda | \neq 1 $.

In terms of  $ m( z, \lambda ) = ( 1 + | z | )^{ -( 2 + \varepsilon / 2 ) } \mu( z, \lambda ) $ and $ n( z, \lambda ) = ( 1 + | z | )^{ -( 2 + \varepsilon / 2 ) } \nu( z, k ) $ equations (\ref{mu_int_equation}) and (\ref{nu_int_equation}), respectively, take the forms
\begin{multline}
\label{m_equation}
m( z, \lambda ) = ( 1 + | z | )^{ -( 2 + \varepsilon / 2 ) } + \\
+ \iint\limits_{ \mathbb{C} } ( 1 + | z | )^{ -( 2 + \varepsilon / 2 ) } g( z - \zeta, \lambda ) \frac{ v( \zeta ) }{ ( 1 + | \zeta | )^{ -( 2 + \varepsilon / 2 ) } } m( \zeta, \lambda ) d \Re \zeta d \Im \zeta,
\end{multline}
\begin{multline}
\label{n_equation}
n( z, \lambda ) = \frac{ i \sqrt{E} }{ 2 } \left( \lambda \bar z - \frac{ 1 }{ \lambda } z \right) ( 1 + | z | )^{ -( 2 + \varepsilon / 2 ) } + \\
+ \iint\limits_{ \mathbb{C} } ( 1 + | z | )^{ -( 2 + \varepsilon / 2 ) } g( z - \zeta, \lambda ) \frac{ v( \zeta ) }{ ( 1 + | \zeta | )^{ -( 2 + \varepsilon / 2 ) } } n( \zeta, \lambda ) d \Re \zeta d \Im \zeta.
\end{multline}
The integral operator $ A( \lambda ) $ of the integral equations (\ref{m_equation}), (\ref{n_equation}) is a Hilbert-Schmidt operator: more precisely, $ A( \cdot, \cdot, \lambda ) \in L^2( \mathbb{C} \times \mathbb{C} ) $, where $ A( z, \zeta, \lambda ) $ is the Schwartz kernel of the integral operator $ A( \lambda ) $, and $ | \Tr A^2( \lambda ) | < \infty $. Thus, the modified Fredholm determinant $ \Delta( \lambda ) $ for (\ref{m_equation}) and (\ref{n_equation}) can be defined by means of the formula:
\begin{equation}
\label{fred_determinant}
\ln \Delta( \lambda ) = \Tr( \ln( I - A( \lambda ) ) + A( \lambda ) ).
\end{equation}
For the precise sense of this definition see \cite{GK}. Considerations of $ \Delta $ go back to \cite{F2}.

We will also define
\begin{equation*}
\mathcal{E} = \{ \lambda \in \Sigma \colon \Delta( \lambda ) = 0 \},
\end{equation*}
where
\begin{equation*}
\Sigma = \mathbb{C} \backslash ( 0 \cup T ) \; \text{ if } E > 0 \quad \text{ and } \quad \Sigma = \mathbb{C} \backslash 0 \; \text{ if } E < 0, \quad T = \{ \lambda \in \mathbb{C} \colon | \lambda | = 1 \}.
\end{equation*}

In this notation $ \mathcal{E} $ represents the set of $ \lambda $ for which either the existence or the uniqueness of the solution of (\ref{schrodinger}) with asymptotics (\ref{psi_asympt}) (or, similarly, of the solution of (\ref{schrodinger}) with asymptotics (\ref{phi_asympt})) fails.

For $ \lambda \in \mathbb{C} \backslash ( \mathcal{E} \cup 0 ) $ we define the following ``scattering data'' for the potential $ v $:
\begin{align}
\label{a_data}
& a( \lambda ) = \iint\limits_{ \mathbb{C} } v( \zeta ) \mu( \zeta, \lambda ) d \Re \zeta d \Im \zeta, \\
& b( \lambda ) = \iint\limits_{ \mathbb{C} } \exp \left( \frac{ i \sqrt{ E } }{ 2 } \left( 1 + ( \sgn E ) \frac{ 1 }{ \lambda \bar \lambda } \right)( ( \sgn E ) \zeta \bar \lambda + \lambda \bar \zeta ) \right) v( \zeta ) \mu( \zeta, \lambda ) d \Re \zeta d \Im \zeta, \\
& \alpha( \lambda ) = \iint\limits_{ \mathbb{C} }  v( \zeta ) \nu( \zeta, \lambda ) d \Re \zeta d \Im \zeta, \\
\label{beta_data}
& \beta( \lambda ) = \iint\limits_{ \mathbb{C} } \exp \left( \frac{ i \sqrt{ E } }{ 2 } \left( 1 + ( \sgn E ) \frac{ 1 }{ \lambda \bar \lambda } \right)( ( \sgn E ) \zeta \bar \lambda + \lambda \bar \zeta ) \right) v( \zeta ) \nu( \zeta, \lambda ) d \Re \zeta d \Im \zeta.
\end{align}

Functions $ a $, $ b $ are the Faddeev generalized scattering data for the $ 2 $-dimensional Schr\"odinger equation. They also arise in a more precise version of expansion (\ref{faddeev_eigenfunction}). The ``scattering data'' $ \alpha $, $ \beta $ for the case of the Schr\"odinger equation at zero energy were introduced in \cite{BLMP1}.

Now we formulate some properties of the introduced functions that will play a substantial role in the proof of the main result.
\begin{statement}[see \cite{F2,HN,Nov1,KN3}]
\label{delta_prop}
Let $ v $ satisfy conditions (\ref{v_conditions}). Then function $ \Delta( \lambda ) $ satisfies the following properties:
\begin{enumerate}
\item \label{delta_continuity} $ \Delta \in C( \bar D_+ ) $, $ \Delta \in C( \bar D_- ) $, where $ \bar D_+ = D_+ \cup \partial D_+ $, $ D_+ = \{ \lambda \in \mathbb{C} \colon | \lambda | < 1 \} $, $ \bar D_- = D_- \cup \partial D_- $, $ D_- = \{ \lambda \in \mathbb{C} \colon | \lambda | > 1 \} $;

if $  E < 0 $ or if $ E > 0 $ and $ v $ is transparent, then $ \Delta \in C( \mathbb{C} ) $;
\item \label{delta_limit} $ \Delta( \lambda ) \to 1 $ as $ | \lambda | \to \infty $, $ | \lambda | \to 0 $;
\item \label{real_valued} $ \Delta $ is real-valued;
\item $ \Delta( \lambda ) $ satisfies the following $ \bar \partial $-equation
\begin{equation}
\label{dif_det}
\frac{ \partial \Delta }{ \partial \bar \lambda } = - \frac{ \sgn( \lambda \bar \lambda - 1 ) }{ 4 \pi \bar \lambda } \left( a\left( - ( \sgn E ) \frac{ 1 }{ \bar \lambda } \right) - \hat v( 0 ) \right) \Delta,
\end{equation}
where $ \hat v( 0 ) = \iint\limits_{ \mathbb{C} } v( \zeta ) d \Re \zeta d \Im \zeta $, $ \lambda \in \mathbb{C} \backslash ( T \cup \mathcal{E} \cup 0 ) $, $ T = \{ \lambda \in \mathbb{C} \colon | \lambda | = 1 \} $;
\item \label{delta_sym} $ \Delta( \lambda ) = \Delta\left( - (\sgn E) \frac{ 1 }{ \bar \lambda } \right) $, $ \lambda \in \mathbb{C} \backslash 0 $;
\item \label{constT} if $ E < 0 $ or if $ E > 0 $ and $ v $ is transparent, then $ \Delta \equiv \const $ on $ T = \{ \lambda \in \mathbb{C} \colon | \lambda | = 1 \} $.
\end{enumerate}
\end{statement}
Note that $ \Delta \not \in C( \mathbb{C} ) $ for $ E > 0 $, in general. In this case $ \Delta $ on $ \bar D_{ \pm } $ is considered as an extension from $ D_{ \pm } $.

If $ v $ satisfies assumptions (\ref{v_conditions}), then functions $ a( \lambda ) $, $ b( \lambda ) $, $ \alpha( \lambda ) $, $ \beta( \lambda ) $ are continuous on $ \mathbb{C} \backslash ( \mathcal{E} \cup 0 ) $. Note also (see \cite{HN}) that
\begin{equation}
\label{fourrier_v}
a( \lambda ) \to \hat v( 0 ) \text{ as } \lambda \to 0, \lambda \to \infty.
\end{equation}
If the scattering data $ a $ are well-defined on the unit circle $ T $, then its behavior on $ T $ is described as follows:
\begin{equation}
\label{a_behavior}
\begin{aligned}
& a \equiv 0 \text{ on } T \text{ if } E > 0 \text{ and } v \text{ is transparent (see \cite{GN2})}, \\
& a \equiv b \text{ on } T \text{ if } E < 0.
\end{aligned}
\end{equation}

If $ v $ satisfies assumptions (\ref{v_conditions}) and, in the case of positive energy, $ v $ is transparent, i.e. $ f \equiv 0 $ at fixed energy, then the function $ \mu( z, \lambda ) $, defined by (\ref{mu_int_equation}), satisfies the following properties (see \cite{GN3}, \cite{Nov2}, \cite{G2} and references therein):
\begin{equation}
\label{mu_continuity}
\mu( z, \lambda ) \text{ is a continuous function of } \lambda \text{ on } \mathbb{C} \backslash ( 0 \cup \mathcal{E} );
\end{equation}

\begin{subequations}
\label{mu_props}
\begin{align}
\label{mu_dbar}
& \frac{ \partial \mu( z, \lambda ) }{ \partial \bar \lambda } = r( z, \lambda ) \overline{ \mu( z, \lambda ) }, \\
\label{r}
& r( z, \lambda ) = r( \lambda ) \exp\left( -\frac{ i \sqrt{ E } }{ 2 } \left( 1 + ( \sgn E ) \frac{ 1 }{ \lambda \bar \lambda } \right) \left( (\sgn E) \bar \lambda z + \lambda \bar z \right) \right), \\
\label{t}
& r( \lambda ) = \frac{ \sgn( \lambda \bar \lambda - 1 ) }{ 4 \pi \bar \lambda } b( \lambda )
\end{align}
\end{subequations}
for $ \lambda \in \mathbb{C} \backslash ( 0 \cup \mathcal{E} \cup T ) $, where $ T = \{ \lambda \in \mathbb{C} \colon | \lambda | = 1 \} $;

\begin{equation}
\label{mu_limits}
\mu \to 1, \text{ as } \lambda \to \infty, \; \lambda \to 0.
\end{equation}

Inverse scattering equations (\ref{mu_props}) together with conditions (\ref{mu_continuity}) and (\ref{mu_limits}) determine uniquely function $ \mu $ from nonsingular scattering data $ b $, i.e. when $ \mathcal{E} = \varnothing $. Potential $ v $ (transparent for the case of positive energy) can then be found from the following formula
\begin{equation}
\label{v_repres}
v( z ) = 2 i \sqrt{E} \frac{ \partial \mu_{ -1 }( z ) }{ \partial z },
\end{equation}
where $ \mu_{ -1 }( z ) $ is defined via the following expansion
\begin{equation}
\label{mu_lambda_asympt}
\mu( z, \lambda ) = 1 + \frac{ \mu_{ -1 }( z ) }{ \lambda } + o\left( \frac{ 1 }{ | \lambda | } \right), \text{ as } \lambda \to \infty.
\end{equation}

\section{Proof of Theorem \ref{main_theorem}}
\label{main_section}
We will start this section by formulating some preliminary lemmas. The proofs of Lemmas \ref{shift_lemma}, \ref{dyn_lemma} are given in Section \ref{lemmas_section}.

\begin{lemma}
\label{w_lemma}
Let $ v \in C^4( \mathbb{C} ) $ and
\begin{equation}
\label{assum}
| \partial^{ j_1 }_{ z } \partial^{ j_2 }_{ \bar z } v( z ) | < \frac{ q }{ ( 1 + | z | )^{ j + 3 + \varepsilon } }, \quad j_1, j_2 \in \{ 0 \cup \mathbb{N} \}, \, j = j_1 + j_2 \leqslant 4
\end{equation}
for $ \forall z \in \mathbb{C} $ and some $ q > 0 $, $ \varepsilon > 0 $.
Let $ w $ be defined by
\begin{align*}
& \partial_{ \bar z } w = - 3 \partial_{ z } v, \\
& w( z ) \to 0 \text{ as } z \to \infty.
\end{align*}
Then
\begin{align}
\label{w_asympt}
& w( z ) = \frac{ 3 \hat v( 0 ) }{ \pi z^2 } + O\left( \frac{ 1 }{ | z |^3 } \right), \text{ as } z \to \infty, \\
\label{part_w_asympt}
& \partial_{ z } w( z ) = - \frac{ 6 \hat v( 0 ) }{ \pi z^3 } + O\left( \frac{ 1 }{ | z |^4 } \right), \text{ as } z \to \infty,
\end{align}
where $ \hat v( 0 ) = \iint\limits_{ \mathbb{C} } v( \zeta ) d \Re \zeta d \Im \zeta $.
\end{lemma}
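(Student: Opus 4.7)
My plan is to represent both $w$ and $\partial_z w$ via the two-dimensional Cauchy transform and then expand the kernel $1/(\zeta-z)$ in inverse powers of $z$, computing the resulting moments of $v$ by integration by parts.

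Since any two decaying solutions of $\partial_{\bar z} u = -3\partial_z v$ differ by a bounded entire function, Liouville's theorem forces
\begin{equation*}
w(z) = \frac{3}{\pi} \iint_{\mathbb{C}} \frac{\partial_\zeta v(\zeta)}{\zeta - z}\, d\Re\zeta\, d\Im\zeta,
\end{equation*}
the integral converging absolutely thanks to $|\partial_\zeta v(\zeta)| \leq q(1+|\zeta|)^{-4-\varepsilon}$. To extract the large-$|z|$ behavior I split the domain into $\{|\zeta|\leq|z|/2\}$ and its complement. On the outer region, the bounds $|\partial_\zeta v| \lesssim |\zeta|^{-4-\varepsilon}$ combined with the local integrability of $1/|\zeta-z|$ give a contribution of size $O(|z|^{-3-\varepsilon})$. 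On the inner region, I substitute the geometric expansion
\begin{equation*}
\frac{1}{\zeta - z} = -\frac{1}{z} - \frac{\zeta}{z^2} + O\!\left(\frac{|\zeta|^2}{|z|^3}\right).
\end{equation*}
Integration by parts shows the first moment $\iint \partial_\zeta v\,d^2\zeta$ vanishes (fundamental theorem of calculus, using $v\to 0$), while the identity $\zeta\partial_\zeta v = \partial_\zeta(\zeta v) - v$, together with $|\zeta v|\lesssim(1+|\zeta|)^{-2-\varepsilon}$ to kill the boundary term, gives $\iint \zeta\partial_\zeta v\,d^2\zeta = -\hat v(0)$. Assembling these ingredients yields (\ref{w_asympt}).

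For (\ref{part_w_asympt}), I differentiate the Cauchy representation for $w$; using $\partial_z(\zeta-z)^{-1} = -\partial_\zeta(\zeta-z)^{-1}$ and integrating by parts in $\zeta$ (justified by the decay of $\partial_\zeta^2 v$ supplied by the $C^4$-hypothesis (\ref{assum})) yields
\begin{equation*}
\partial_z w(z) = \frac{3}{\pi} \iint_{\mathbb{C}} \frac{\partial_\zeta^2 v(\zeta)}{\zeta - z}\, d\Re\zeta\, d\Im\zeta.
\end{equation*}
Expanding the kernel to one further order in $1/z$, the first two moments $\iint \partial_\zeta^2 v\,d^2\zeta$ and $\iint \zeta\partial_\zeta^2 v\,d^2\zeta$ vanish by the same integration-by-parts trick, and the $\zeta^2$-moment reduces to $2\hat v(0)$ via $\zeta^2\partial_\zeta^2 v = \partial_\zeta(\zeta^2\partial_\zeta v) - 2\zeta\partial_\zeta v$ applied twice. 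This produces the leading term $-6\hat v(0)/(\pi z^3)$ with error $O(|z|^{-4})$, giving (\ref{part_w_asympt}).

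The main technical point will be justifying the integration by parts in the presence of the interior singularity at $\zeta = z$ when deriving the representation for $\partial_z w$: this is handled by excising a small disk around $\zeta = z$, the boundary contribution from its edge vanishing as the radius shrinks because $1/|\zeta-z|$ is locally integrable. Everything else is bookkeeping of decay rates through the inner/outer split.
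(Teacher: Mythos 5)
Your argument is correct, but it follows a genuinely different route from the paper's. The paper disposes of this lemma in one sentence: it sets $W(\xi) = w(1/\xi)$ and invokes Taylor's formula for $W$ (and for $-\xi^2 W'(\xi) = \partial_z w|_{z=1/\xi}$) near $\xi = 0$, i.e.\ it reads off the asymptotics from smoothness of $w$ at the point at infinity after inversion. You instead work directly with the solid Cauchy transform $w(z) = \frac{3}{\pi}\iint \frac{\partial_\zeta v(\zeta)}{\zeta - z}\,d^2\zeta$ (pinned down by Liouville), split the integral at $|\zeta| = |z|/2$, expand the kernel in powers of $1/z$, and evaluate the moments $\iint \partial_\zeta v = 0$ and $\iint \zeta\,\partial_\zeta v = -\hat v(0)$ (and their second-derivative analogues $0$, $0$, $2\hat v(0)$) by integration by parts, with the decay hypothesis (\ref{assum}) killing all boundary terms and controlling the truncation and outer-region errors at the orders $O(|z|^{-3})$ and $O(|z|^{-4})$ respectively. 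Your computations check out: the moment identities, the coefficient $3\hat v(0)/\pi$ in (\ref{w_asympt}) and $-6\hat v(0)/\pi$ in (\ref{part_w_asympt}), and the error estimates are all consistent with the stated decay of $\partial_\zeta^{j} v$. What your approach buys is that it is self-contained and actually exhibits where the coefficient $\hat v(0)$ comes from, whereas the paper's inversion argument is shorter but tacitly presupposes second-order differentiability of $W$ at $\xi = 0$ together with knowledge of the Taylor coefficients, which in practice one would extract from the very Cauchy-transform representation you use. The one step you flag as delicate, passing $\partial_z$ through the Cauchy integral to get $\partial_z w = \frac{3}{\pi}\iint \frac{\partial_\zeta^2 v(\zeta)}{\zeta-z}\,d^2\zeta$, is handled correctly by your disk-excision (or, even more simply, by the substitution $\zeta = z+u$ and differentiation under the integral sign), and the required decay of $\partial_\zeta^2 v$ is indeed supplied by (\ref{assum}).
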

The proof of representations (\ref{w_asympt}), (\ref{part_w_asympt}) is easily derived from the Taylor formula for $ W( \xi ) = w\left( \frac{ 1 }{ \xi } \right) $ and $ - \xi^2 W( \xi ) = \partial_{ z } w( z ) |_{ z = \frac{ 1 }{ \xi } } $, correspondingly, in the neighborhood of $ \xi = 0 $.

Let us denote by
\begin{equation}
\label{s_data}
S( \lambda ) = \{ a( \lambda ), b( \lambda ), \alpha( \lambda ), \beta( \lambda ) \}, \quad \lambda \in \mathbb{C} \backslash ( \mathcal{E} \cup 0 ),
\end{equation}
the scattering data for a potential $ v $, defined by (\ref{a_data})-(\ref{beta_data}) in the framework of equation (\ref{schrodinger}).

\begin{lemma}
\label{shift_lemma}
Let $ v( z ) $ be a potential satisfying (\ref{v_conditions}) with the scattering data $  S( \lambda ) $, $ \lambda \in \mathbb{C} \backslash ( \mathcal{E} \cup 0 ) $. Then the scattering data $ S_{ \eta }( \lambda ) $ for the potential $ v_{ \eta }( z ) = v( z - \eta ) $ are defined for $ \lambda \in \mathbb{C} \backslash ( \mathcal{E} \cup 0 ) $ and are related to $ S( \lambda ) $ by the following formulas
\begin{align}
\label{a_eta}
& a_{ \eta }( \lambda ) = a( \lambda ), \\
\label{b_eta}
& b_{ \eta }( \lambda ) =  \exp\left\{ \frac{ i \sqrt{ E } }{ 2 } \left( 1 + ( \sgn E ) \frac{ 1 }{ \lambda \bar \lambda } \right)\left( ( \sgn E ) \bar \lambda \eta + \lambda \bar \eta \right) \right\} b( \lambda ), \\
\label{alpha_eta}
& \alpha_{ \eta }( \lambda ) = \alpha( \lambda ) + \frac{ i \sqrt{ E } }{ 2 } \left( \lambda \bar \eta - \frac{ 1 }{ \lambda } \eta \right) a( \lambda ), \\
\label{beta_eta}
& \beta_{ \eta }( \lambda ) = \exp\left\{ \frac{ i \sqrt{ E } }{ 2 } \left( 1 + ( \sgn E ) \frac{ 1 }{ \lambda \bar \lambda } \right)\left( ( \sgn E ) \bar \lambda \eta + \lambda \bar \eta \right) \right\} \left( \beta( \lambda ) + \frac{ i \sqrt{ E } }{ 2 } \left( \lambda \bar \eta - \frac{ 1 }{ \lambda } \eta \right) b( \lambda ) \right).
\end{align}
\end{lemma}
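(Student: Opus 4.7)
The strategy is first to understand how the translation $v \mapsto v_\eta$ transforms the two families of eigenfunctions $\mu(z,\lambda)$ and $\nu(z,\lambda)$, and then to substitute these transformation laws into the integral definitions \eqref{a_data}--\eqref{beta_data} and reduce everything to the original integrals via the change of variable $\zeta \to \zeta + \eta$.

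For the first step, I would start from the integral equations \eqref{mu_int_equation} and \eqref{nu_int_equation}. Because the Green's function $g(z-\zeta,\lambda)$ depends only on $z-\zeta$, substituting $\zeta \mapsto \zeta+\eta$ in the integral equation for $v_\eta$ shows directly that $\mu_{\eta}(z,\lambda) = \mu(z-\eta,\lambda)$ solves \eqref{mu_int_equation} with potential $v_\eta$. This also guarantees that the Fredholm determinant and its zero set $\mathcal{E}$ are unchanged by translation, so the scattering data $S_\eta(\lambda)$ are defined on the same domain $\mathbb{C} \setminus (\mathcal{E} \cup 0)$. For $\nu$, the naive translate $\nu(z-\eta,\lambda)$ has the wrong linear asymptotic at infinity: replacing $z$ by $z-\eta$ in \eqref{phi_asympt} produces an extra term $-\tfrac{i\sqrt{E}}{2}(\lambda\bar\eta - \eta/\lambda)$. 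This suggests the ansatz
\begin{equation*}
\nu_\eta(z,\lambda) = \nu(z-\eta,\lambda) + \tfrac{i\sqrt{E}}{2}\bigl(\lambda \bar\eta - \tfrac{1}{\lambda}\eta\bigr)\mu(z-\eta,\lambda),
\end{equation*}
which one verifies satisfies \eqref{nu_int_equation} with $v_\eta$ by expanding the right-hand side, applying the integral equations for $\mu$ and $\nu$, and observing that the extra $(z,\eta)$-linear terms cancel.

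For the second step, I would plug $v_\eta(\zeta)=v(\zeta-\eta)$ together with these expressions for $\mu_\eta,\nu_\eta$ into \eqref{a_data}--\eqref{beta_data} and translate the integration variable $\zeta \to \zeta+\eta$. The formula \eqref{a_eta} for $a_\eta$ follows immediately, since the integrand reduces to $v(\zeta)\mu(\zeta,\lambda)$. For $\alpha_\eta$, the $\mu$-correction in $\nu_\eta$ produces exactly the additional term $\tfrac{i\sqrt{E}}{2}(\lambda\bar\eta - \eta/\lambda) a(\lambda)$ in \eqref{alpha_eta}. For $b_\eta$ and $\beta_\eta$, the exponential factor in the integrand does not depend on $\zeta$ via a pure difference with $\eta$, so after the substitution $\zeta \to \zeta+\eta$ one extracts the $\eta$-dependent piece of the exponent outside the integral, giving the prefactor $\exp\{\tfrac{i\sqrt{E}}{2}(1+(\sgn E)/|\lambda|^2)((\sgn E)\bar\lambda\eta + \lambda\bar\eta)\}$ appearing in \eqref{b_eta} and \eqref{beta_eta}; the remaining integrals give $b(\lambda)$ and, respectively, $\beta(\lambda) + \tfrac{i\sqrt{E}}{2}(\lambda\bar\eta - \eta/\lambda)b(\lambda)$.

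The only genuinely non-routine part is guessing and verifying the correct translation law for $\nu$: the linear-in-$z$ asymptotic \eqref{phi_asympt} is not translation-invariant, and the right compensating multiple of $\mu(z-\eta,\lambda)$ has to be identified. Everything else is a substitution in the integral definitions, so no significant analytical obstacle remains once the eigenfunction identities are in place.
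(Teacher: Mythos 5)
Your proposal is correct and follows essentially the same route as the paper: identify the translation laws $\mu_\eta(z,\lambda)=\mu(z-\eta,\lambda)$ and $\nu_\eta(z,\lambda)=\nu(z-\eta,\lambda)+\tfrac{i\sqrt{E}}{2}\bigl(\lambda\bar\eta-\tfrac{1}{\lambda}\eta\bigr)\mu(z-\eta,\lambda)$, then substitute into the definitions of $a,b,\alpha,\beta$ and shift the integration variable. The only (immaterial) difference is that you verify these laws directly from the integral equations \eqref{mu_int_equation}--\eqref{nu_int_equation}, whereas the paper obtains them by matching the asymptotics \eqref{psi_asympt}, \eqref{phi_asympt} of the translated eigenfunctions and invoking uniqueness.
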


\begin{lemma}
\label{dyn_lemma}
Let $ ( v, w ) $ satisfy equation (\ref{NV}) and conditions (\ref{smoothness})-(\ref{w_decrease}). Let $ S( \lambda, t ) $ be the scattering data for $ v $ defined by (\ref{s_data}) for a certain $ \lambda \in \mathbb{C} \backslash ( \mathcal{E} \cup 0 ) $ and all $ t \in \mathbb{R} $. Then the evolution of these scattering data is described as follows:
\begin{align}\
\label{a_evolution}
& a( \lambda, t ) = a( \lambda, 0 ), \\
\label{b_evolution}
& b( \lambda, t ) = \exp \left\{ i ( \sqrt{E} )^3 \left( \lambda^3 + \frac{ 1 }{ \lambda^3 } + ( \sgn E ) \left( \bar \lambda^3 + \frac{ 1 }{ \bar \lambda^3 } \right) \right) t \right\} b( \lambda, 0 ), \\
\label{alpha_evolution}
& \alpha( \lambda, t ) = \alpha( \lambda, 0 ) + 3 i ( \sqrt{E} )^3 \left( \lambda^3 - \frac{ 1 }{ \lambda^3 } \right) ( a( \lambda, 0 ) - \hat v( 0 ) ) t, \\
\label{beta_evolution}
& \beta( \lambda, t ) = \exp \left\{ i ( \sqrt{E} )^3 \left( \lambda^3 + \frac{ 1 }{ \lambda^3 } + ( \sgn E ) \left( \bar \lambda^3 + \frac{ 1 }{ \bar \lambda^3 } \right) \right) t \right\} \left( \beta( \lambda, 0 ) + 3 i ( \sqrt{E} )^3 \left( \lambda^3 - \frac{ 1 }{ \lambda^3 } \right) b( \lambda, 0 ) t \right).
\end{align}
\end{lemma}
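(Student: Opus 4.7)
The plan is to follow the classical Lax-pair derivation of scattering-data dynamics in three steps. First I extract the time evolution of the eigenfunctions $\psi$ and $\varphi$: differentiating $(L-E)\psi=0$ and $(L-E)\varphi=0$ in $t$ and using the Lax representation (\ref{L-A-B}) yields $(L-E)(\partial_t\psi+A\psi)=0$ and $(L-E)(\partial_t\varphi+A\varphi)=0$, so $\partial_t\psi+A\psi$ and $\partial_t\varphi+A\varphi$ are themselves Schr\"odinger eigenfunctions at energy $E$. I identify them by matching asymptotics at $|z|\to\infty$: using (\ref{A-B}), $\partial_z e^{ikz}=\frac{i\sqrt E}{2\lambda}e^{ikz}$, $\partial_{\bar z}e^{ikz}=\frac{i\sqrt E\,\lambda}{2}e^{ikz}$, the asymptotics (\ref{psi_asympt})--(\ref{phi_asympt}), and the decay $w,\bar w\to 0$ from (\ref{w_decrease}), a direct calculation gives
\[
\partial_t\psi+A\psi=C(\lambda)\psi,\qquad \partial_t\varphi+A\varphi=C(\lambda)\varphi+3D(\lambda)\psi,
\]
where $C(\lambda)=i(\sqrt E)^3(\lambda^3+\lambda^{-3})$ and $D(\lambda)=i(\sqrt E)^3(\lambda^3-\lambda^{-3})$. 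The extra $\psi$-term in the $\varphi$-equation arises because the linear growth of $\nu$ combines with $\partial_z^2 e^{ikz}$ and $\partial_{\bar z}^2 e^{ikz}$ to produce a non-vanishing contribution of order $e^{ikz}$.

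Next I differentiate $a(\lambda,t)=\iint v\mu\,d^2\zeta$ in $t$ and substitute the evolution of $\mu$ together with the identity $(\partial_t v)\psi=(L-E)A\psi$ coming from the Lax pair. Integration by parts against $e^{-ikz}$, using $(-\Delta-E)e^{-ikz}=0$, converts the bulk terms into a boundary integral at $|z|=\infty$. Using the large-$|z|$ expansion of $\mu-1$, this boundary contribution cancels the $C(\lambda)a$ coming from the $C(\lambda)\mu$ summand of $\partial_t\mu$, yielding $\partial_t a=0$. For $\alpha(\lambda,t)=\iint v\nu\,d^2\zeta$ the same argument applies with the extra $3D(\lambda)\mu$-term from $\partial_t\nu$; because $\nu$ grows linearly, a non-vanishing boundary contribution remains, whose value Lemma \ref{w_lemma} identifies with $-3D(\lambda)\hat v(0)$. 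This produces $\partial_t\alpha=3D(\lambda)(a-\hat v(0))$, and integration in $t$ combined with $\partial_t a=0$ gives (\ref{alpha_evolution}).

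For $b$ and $\beta$ the key observation is the factorization $e^{\phi(\lambda,\zeta)}=e^{ik(\lambda)\zeta}\,e^{ik(\lambda')\zeta}$ with $\lambda'=(\sgn E)/\bar\lambda$, so that $b(\lambda)=\iint v(\zeta)\psi(\zeta,\lambda)\,e^{ik(\lambda')\zeta}\,d^2\zeta$ and analogously for $\beta$. Since $(k')^2=E$, the function $e^{ik(\lambda')\zeta}$ is a free-wave solution at the same energy, and the argument above applies verbatim with this ``dual'' test function in place of the constant $1$: the $\partial_t\psi$ contribution supplies the factor $C(\lambda)$, while the $e^{ik(\lambda')\zeta}$ prefactor contributes the additional factor $C(\lambda')$ through the integration by parts with the third-order operator. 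Since
\[
C(\lambda)+C(\lambda')=i(\sqrt E)^3\bigl(\lambda^3+\lambda^{-3}+(\sgn E)(\bar\lambda^3+\bar\lambda^{-3})\bigr),
\]
the exponent in (\ref{b_evolution}) appears exactly; the $\varphi$-analog of the same computation produces the additional linear-in-$t$ summand $3D(\lambda)b(\lambda,0)t$ in (\ref{beta_evolution}).

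The principal obstacle will be the integration-by-parts step: the operator $A$ is of third order, its coefficients $w,\bar w$ decay only as $|z|^{-2}$, and it acts on eigenfunctions that are merely bounded ($\psi$) or grow linearly ($\varphi$) at infinity. Justifying the cancellation of bulk terms, the vanishing of the boundary contributions for $a$ and $b$, and the precise evaluation of the $-3D(\lambda)\hat v(0)$ residues for $\alpha$ and $\beta$ relies on the sharp decay (\ref{decrease})--(\ref{w_decrease}) together with the leading asymptotics (\ref{w_asympt})--(\ref{part_w_asympt}) of $w$ and $\partial_z w$ given by Lemma \ref{w_lemma}.
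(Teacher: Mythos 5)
Your proposal follows essentially the same route as the paper: the Lax representation shows that $T\psi=\partial_t\psi+A\psi$ and $T\varphi=\partial_t\varphi+A\varphi$ are again eigenfunctions, which are identified as $C(\lambda)\psi$ and $C(\lambda)\varphi+3D(\lambda)\psi$ by matching asymptotics (the paper's Lemma \ref{aux_lemma} and formula (\ref{partial_phi})), after which the evolution laws follow by differentiating the data and integrating by parts, with Lemma \ref{w_lemma} supplying the $\hat v(0)$ boundary residue and the dual exponential with $\lambda'=(\sgn E)/\bar\lambda$ producing the exponent $C(\lambda)+C(\lambda')$ exactly as you indicate. The one step you pass over quickly is that matching asymptotics of $A\psi$ and $A\varphi$ requires knowing $\partial_t\mu\to0$ and $\partial_z^j\mu,\partial_{\bar z}^j\mu\to0$ for $j=1,2,3$ (and the analogues for $\nu$), which does not follow from $\mu=1+o(1)$ alone and is established in the paper by a separate integral-equation argument for each derivative.
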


\begin{lemma}[see \cite{Nov2}]
\label{scat_lemma}
Let $ ( v, w ) $ satisfy equation (\ref{NV}) for some $ E > 0 $ and conditions (\ref{smoothness})-(\ref{w_decrease}). In addition, let $ v $ be a soliton, i.e. $ v( x, t ) = V( x - c t ) $ for some $ c = ( c_1, c_2 ) \in \mathbb{R}^2 $. Then $ f( k, l ) \equiv 0 $, $ k, l \in \mathbb{R}^2 $, $ k^2 = l^2 = E > 0 $, where $ f $ is the scattering amplitude for the potential $ v $ in the framework of the Schr\"odinger equation (\ref{t_schrodinger}).
\end{lemma}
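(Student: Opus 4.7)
\textbf{Proof plan for Lemma \ref{scat_lemma}.} The strategy is to confront two independent formulas for the Faddeev datum $ b( \lambda, t ) $: the one produced by the soliton ansatz via Lemma \ref{shift_lemma}, and the one produced by the Novikov--Veselov flow via Lemma \ref{dyn_lemma}. Their compatibility, evaluated on the unit circle $ T $, will force $ b \equiv 0 $ there; the remaining step is then to invoke the (standard) equivalence between transparency $ f \equiv 0 $ and the vanishing of $ b $ on $ T $.

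First I would identify $ \mathbb{R}^2 $ with $ \mathbb{C} $ via $ z = x_1 + i x_2 $ and write $ \eta( t ) = ( c_1 + i c_2 ) t $, so that the soliton hypothesis reads $ v( \cdot, t ) = v_{ \eta( t ) }( \cdot, 0 ) $ in the notation of Lemma \ref{shift_lemma}. That lemma then yields
\begin{equation*}
b( \lambda, t ) = \exp\left\{ \frac{ i \sqrt{ E } }{ 2 } \left( 1 + \frac{ 1 }{ \lambda \bar \lambda } \right)\left( \bar \lambda \eta( t ) + \lambda \overline{ \eta( t ) } \right) \right\} b( \lambda, 0 ),
\end{equation*}
while Lemma \ref{dyn_lemma}, applied at $ E > 0 $ so that $ \sgn E = 1 $, gives
\begin{equation*}
b( \lambda, t ) = \exp\left\{ i E^{ 3 / 2 } \left( \lambda^3 + \bar \lambda^3 + \frac{ 1 }{ \lambda^3 } + \frac{ 1 }{ \bar \lambda^3 } \right) t \right\} b( \lambda, 0 ).
\end{equation*}
Equating the two expressions and demanding the identity hold for every $ t \in \mathbb{R} $, one concludes that for each $ \lambda \in \mathbb{C} \setminus ( \mathcal{E} \cup 0 ) $, either $ b( \lambda, 0 ) = 0 $ or the two phases coincide.

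Next I would restrict to $ \lambda \in T $, where $ \bar \lambda = 1 / \lambda $, and parametrize $ \lambda = e^{ i \theta } $. A short computation simplifies the phase-matching condition to
\begin{equation*}
2 \sqrt{ E } ( c_1 \cos \theta + c_2 \sin \theta ) = 4 E^{ 3 / 2 } \cos ( 3 \theta ).
\end{equation*}
Because $ E > 0 $, this is a nontrivial trigonometric polynomial identity in $ \theta $ (the frequencies on the two sides are distinct), so it holds for at most finitely many $ \theta \in [ 0, 2 \pi ) $. Consequently $ b( \cdot, 0 ) $ vanishes on $ T \setminus \mathcal{E} $ outside a finite set, and the continuity of $ b $ on $ T \setminus \mathcal{E} $ (guaranteed by assumption (\ref{v_conditions}), which is implied by (\ref{smoothness})--(\ref{w_decrease})) promotes this to $ b \equiv 0 $ on $ T \setminus \mathcal{E} $.

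The step I expect to be the main obstacle is the final one: translating $ b |_T \equiv 0 $ into the vanishing of the classical scattering amplitude $ f $ on the energy shell $ k^2 = l^2 = E $. This is not purely formal but rests on the known correspondence, on the physical sheet $ T $, between the Faddeev eigenfunction $ \psi( \cdot, \lambda ) $ and the scattering eigenfunction $ \psi^+( \cdot, k ) $ of (\ref{scat_eigenfunction}), and on the resulting identification of $ b( \lambda ) $ for $ \lambda \in T $ with (a constant multiple of) the scattering amplitude $ f $ at momenta determined by $ \lambda $. Invoking this identification, established in \cite{Nov2} in precisely the form needed here, converts $ b \equiv 0 $ on $ T \setminus \mathcal{E} $ into $ f \equiv 0 $ on $ \{ k^2 = l^2 = E \} $, completing the proof.
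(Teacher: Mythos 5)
The paper does not prove this lemma at all --- it imports it from \cite{Nov2} --- so your proposal must stand on its own, and it has a genuine gap at exactly the step you flag as ``the main obstacle.'' The identification you hope to invoke does not do what you need. For $ \lambda = e^{ i \theta } \in T $ the weight in the definition of $ b $ becomes $ \exp( i \sqrt{E} ( \zeta \bar \lambda + \lambda \bar \zeta ) ) = e^{ 2 i k x } $ with $ k = \sqrt{E} ( \cos \theta, \sin \theta ) $, so $ b( \lambda ) = \iint e^{ i k x } v \, \psi( x, k ) \, dx $, which is (up to normalization and up to the Faddeev-versus-classical eigenfunction issue) the \emph{backscattering} amplitude $ f( k, - k ) $. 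Thus $ b \equiv 0 $ on $ T $ is the vanishing of a one-parameter restriction of $ f $, whereas transparency requires $ f( k, l ) = 0 $ on the full two-parameter energy shell $ k^2 = l^2 = E $. Vanishing backscattering does not imply vanishing of the full amplitude, so your argument, even granting all the boundary-value technicalities on $ T $, proves strictly less than the lemma asserts. (A secondary point: the dictionary between the boundary values of the Faddeev data on $ T $ and the classical amplitude is itself nontrivial --- the limits of $ \psi( \cdot, \lambda ) $ as $ | \lambda | \to 1^{ \pm } $ are Faddeev's $ \psi_{ \gamma } $, related to $ \psi^+ $ through integral equations involving $ f $ --- and in this paper the clean statements about $ \mu $, $ b $ near $ T $ for $ E > 0 $ are formulated \emph{under} the transparency hypothesis, which is what you are trying to prove.)

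The repair is to run your phase-comparison argument directly on the classical scattering amplitude rather than on $ b $. Under the Novikov--Veselov flow one has $ f( k, l, t ) = \exp\{ i t ( S( k ) - S( l ) ) \} f( k, l, 0 ) $ with $ S $ a cubic harmonic in $ k $ (of the type $ k_1^3 - 3 k_1 k_2^2 $, up to constants), while the soliton hypothesis gives $ f( k, l, t ) = e^{ i ( k - l ) \cdot c \, t } f( k, l, 0 ) $. On the shell $ \{ k^2 = l^2 = E \} $, parametrized by two angles, the condition $ S( k ) - S( l ) = ( k - l ) \cdot c $ is a nontrivial trigonometric identity (third harmonics versus first harmonics) and so defines a proper analytic subset; hence $ f( \cdot, \cdot, 0 ) $ vanishes on a dense open subset of the shell and, by continuity, identically. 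This is the argument of \cite{Nov2}, and it is structurally the same cubic-versus-linear phase mismatch you use --- just applied to the two-angle object $ f( k, l ) $ instead of the one-angle object $ b |_T $.
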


The concluding part of the proof of Theorem \ref{main_theorem} consists in the following. First of all, if $ ( v, w ) $ is a soliton solution of equation (\ref{NV}) for some $ E > 0 $, then $ v $ is transparent due to Lemma \ref{scat_lemma}.

Further, since $ ( v, w ) $ is a soliton, from Lemma \ref{shift_lemma} it follows that the set $ \mathcal{E} $ of values of $ \lambda \in \mathbb{C} $ for which the scattering data $ a( \lambda ) $, $ b( \lambda ) $, $ \alpha( \lambda ) $, $ \beta( \lambda ) $ are not well-defined does not depend on $ t $.

Since $ ( v, w ) $ is a soliton, the time dynamics of its scattering data $ b $ is described by the formula
\begin{equation*}
b( \lambda, t ) = \exp\left\{ \frac{ i \sqrt{E} }{ 2 } \left( \left( \lambda + ( \sgn E ) \frac{ 1 }{ \bar \lambda } \right) \bar c t + \left( ( \sgn E ) \bar \lambda + \frac{ 1 }{ \lambda } \right) c t \right) \right\} b( \lambda, 0 ),
\end{equation*}
where notation $ c = c_1 + i c_2 $ is used (see formula (\ref{b_eta}) of Lemma \ref{shift_lemma}). Combining this with (\ref{b_evolution}) from Lemma \ref{dyn_lemma} gives
\begin{multline*}
\exp \left\{ i ( \sqrt{E} )^3 \left( \lambda^3 + \frac{ 1 }{ \lambda^3 } + ( \sgn E ) \left( \bar \lambda^3 + \frac{ 1 }{ \bar \lambda^3 } \right) \right) t \right\} b( \lambda, 0 ) = \\
= \exp\left\{ \frac{ i \sqrt{E} }{ 2 } \left( \left( \lambda + ( \sgn E ) \frac{ 1 }{ \bar \lambda } \right) \bar c t + \left( ( \sgn E ) \bar \lambda + \frac{ 1 }{ \lambda } \right) c t \right) \right\} b( \lambda, 0 ).
\end{multline*}
Since functions $ \lambda $, $ \bar \lambda $, $ \lambda^3 $, $ \bar \lambda^3 $, $ \frac{ 1 }{ \lambda } $, $ \frac{ 1 }{ \bar \lambda } $, $ \frac{ 1 }{ \lambda^3 } $, $ \frac{ 1 }{ \bar \lambda^3 } $, $ 1 $ are linearly independent in any open nonempty neighborhood of any point in $ \mathbb{C} \backslash 0 $ and $ b( \lambda, 0 ) $ is continuous on $ \mathbb{C} \backslash ( \mathcal{E} \cup 0 ) $, we obtain that $ b( \lambda, 0 ) \equiv 0 $ on $ \mathbb{C} \backslash ( \mathcal{E} \cup 0 ) $.

Similarly, from (\ref{alpha_eta}), (\ref{alpha_evolution}) we get that
\begin{equation*}
\alpha( \lambda, 0 ) + \frac{ i \sqrt{E} }{ 2 } \left( \lambda \bar c - \frac{ 1 }{ \lambda } c \right) t a( \lambda, 0 ) = \alpha( \lambda, 0 ) + 3 i ( \sqrt{E} )^3 \left( \lambda^3 - \frac{ 1 }{ \lambda^3 } \right) t ( a( \lambda, 0 ) - \hat v( 0 ) )
\end{equation*}
or
\begin{equation}
\label{key_expression}
a( \lambda, 0 ) = \frac{ 3 i ( \sqrt{E} )^3 \left( \lambda^3 - \frac{ 1 }{ \lambda^3 } \right) \hat v( 0 ) }{ 3 i ( \sqrt{E} )^3 \left( \lambda^3 - \frac{ 1 }{ \lambda^3 } \right) - \frac{ i \sqrt{E} }{ 2 } \left( \lambda \bar c - \frac{ 1 }{ \lambda } c \right) }.
\end{equation}

We will prove that expression (\ref{key_expression}) implies that $ \hat v( 0 ) = 0 $ and thus $ a( \lambda, 0 ) \equiv 0 $ for $ \lambda \in \mathbb{C} \backslash ( \mathcal{E} \cup 0 ) $.

Expression (\ref{key_expression}) is well-defined everywhere except for the points which are the roots of equation
\begin{equation}
\label{denominator}
3 i ( \sqrt{E} )^3 \left( \lambda^3 - \frac{ 1 }{ \lambda^3 } \right) - \frac{ i \sqrt{E} }{ 2 } \left( \lambda \bar c - \frac{ 1 }{ \lambda } c \right) = 0.
\end{equation}
Evidently, equation (\ref{denominator}) has six roots $ \lambda_j $, $ j = 1, \ldots, 6 $, counted with multiplicity. Equation (\ref{denominator}) was studied in detail in \cite{KN1} (see Lemma 3.1 of \cite{KN1}). In particular, it was shown that for any value of parameter $ c $
\begin{equation}
\label{roots_on T}
\text{ equation (\ref{denominator}) has at least two roots on $ T = \{ \lambda \in \mathbb{C} \colon | \lambda | = 1 \} $}.
\end{equation}

We will consider two subcases depending on whether $ \Delta $ vanishes on $ T $ or not. Note that item \ref{constT} of Statement \ref{delta_prop} implies that if $ \Delta $ vanishes in one point of $ T $, then it vanishes in every point of $ T $.

\medskip
\noindent
(i) $ \Delta \neq 0 $ on $ T $

\medskip
In this subcase $ a( \lambda, 0 ) $ and $ b( \lambda, 0 ) $ are well-defined on $ T $. Thus $ b( \lambda, 0 ) \equiv 0 $ on T. From (\ref{a_behavior}) we obtain that $ a( \lambda, 0 ) \equiv 0 $ on $ T $. In view of representation (\ref{key_expression}) and property (\ref{roots_on T}) this can only hold if $ \hat v( 0 ) = 0 $.

\medskip
\noindent
(ii) $ \Delta \equiv 0 $ on $ T $

\medskip
Recall that due to item \ref{real_valued} of Statement \ref{delta_prop} $ \Delta \in \mathbb{R} $ and due to item \ref{delta_limit} of Statement \ref{delta_prop} $ \Delta( 0 ) > 0 $. Consider
\begin{equation}
\label{l_definition}
l_{ \varphi } = \{ r e^{ i \varphi }, 0 \leqslant r < r' \leqslant 1 \colon \Delta( r e^{ i \varphi } ) > 0, \, \Delta( r' e^{ i \varphi } ) = 0 \},
\end{equation}
where $ \varphi \in ( -\pi, \pi ] $ is some angle, such that $ \varphi \neq \Arg \lambda_j $, $ j = 1, \ldots, 6 $, where $ \lambda_j $ are the roots of equation (\ref{denominator}). We will also denote by $ \bar l_{ \varphi } $ the closure of $ l_{ \varphi } $.

Denote
\begin{equation}
\label{absn_u}
u( \bar \lambda ) = - \frac{ \sgn( \lambda \bar \lambda - 1 ) }{ 4 \pi \bar \lambda } \left( a\left( -(\sgn E) \frac{ 1 }{ \bar \lambda } \right) - \hat v( 0 ) \right), \quad \lambda \in l_{ \varphi },
\end{equation}
where $ a $ is given by (\ref{key_expression}). Note that from item \ref{delta_sym} of Statement \ref{delta_prop} it follows that the scattering data $ a( - \sgn E / \bar \lambda ) $ are well-defined for $ \lambda \in l_{ \varphi } $. Using item \ref{real_valued} of Statement \ref{delta_prop} we obtain that
\begin{equation*}
\frac{ \partial \ln \Delta }{ \partial \lambda } = \overline{ u( \bar \lambda ) }, \quad \frac{ \partial \ln \Delta }{ \partial \bar \lambda } = u( \bar \lambda ), \quad \lambda \in l_{ \varphi },
\end{equation*}
and
\begin{equation}
\label{direction_derivative}
\frac{ \partial \ln \Delta }{ \partial \gamma } = ( u( \bar \lambda ) + \overline{ u( \bar \lambda ) } ) \cos \varphi + i ( u( \bar \lambda ) - \overline{ u( \bar \lambda ) } ) \sin \varphi, \quad \gamma \in S^1, \, \gamma = ( \cos \varphi, \sin \varphi ),
\end{equation}
where $ \frac{ \partial \ln \Delta }{ \partial \gamma } $ denotes the directional derivative of $ \ln \Delta $ along the direction $ \gamma $.
From (\ref{key_expression}) it follows that $ u $ is bounded as $ \lambda \to 0 $. Thus by integrating (\ref{direction_derivative}) along $ l_{ \varphi } $ we obtain that
\begin{equation}
\label{U_def}
\ln \Delta( \lambda ) = U( \bar \lambda ) + \overline{ U( \bar \lambda ) }, \quad  U( \bar \lambda ) = \int\limits_{ 0 }^{ \bar \lambda } u( \bar \zeta ) d \bar \zeta, \quad \lambda \in l_{ \varphi },
\end{equation}
or
\begin{equation}
\label{Delta_def}
\Delta( \lambda ) = \Phi( \bar \lambda ) \overline{ \Phi( \bar \lambda ) }, \text{ where } \Phi( \bar \lambda ) = \exp U( \bar \lambda ), \quad \lambda \in l_{ \varphi }.
\end{equation}

From well-definedness of $ a $, given by expression (\ref{key_expression}), on $ \bar l_{ \varphi } $ and continuity of $ \Delta $ (see item \ref{delta_continuity} of Statement \ref{delta_prop}) it follows that (\ref{Delta_def}) is valid for $ \lambda \in \bar l_{ \varphi } $. In particular, expression (\ref{Delta_def}) implies that
\begin{equation}
\label{tilde_not_vanish}
\forall \lambda \in \bar l_{ \varphi } \quad \Delta( \lambda ) \neq 0.
\end{equation}
However, from definition (\ref{l_definition}) we have that for $ r' e^{ i \varphi } \in \bar l_{ \varphi } $ $ \Delta( r' e^{ i \varphi } ) = 0 $ which contradicts (\ref{tilde_not_vanish}). Thus we have shown that subcase (ii), when $ \Delta \equiv 0 $ on $ T $, cannot hold.

We have demonstrated that $ \hat v( 0 ) = 0 $ and, consequently, $ a( \lambda, 0 ) \equiv 0 $ on $ \lambda \in \mathbb{C} \backslash ( \mathcal{E} \cup 0 ) $.

Equation (\ref{dif_det}) together with the established fact that $ a \equiv 0 $ on $ \mathbb{C} \backslash \mathcal{E} $ implies that $ \Delta $ is holomorphic on $ \mathbb{C} \backslash ( \mathcal{E} \cup T \cup 0 ) $, where $ T = \{ \lambda \in \mathbb{C} \colon | \lambda | = 1 \} $. From properties \ref{delta_continuity}, \ref{delta_limit} of Statement \ref{delta_prop} it follows that $ \Delta $ is holomorphic on $ \mathbb{C} \backslash ( \mathcal{E} \cup T ) $.

Suppose now that $ \mathcal{E} \neq \varnothing $. Since $ \mathcal{E} $ is a closed set, then there exists $ \lambda_* \in \mathcal{E} $ such that $ | \lambda_* | = \min\limits_{ \lambda \in \mathcal{E} } | \lambda | $. Note that property \ref{delta_limit} of Statement \ref{delta_prop} implies that $ | \lambda_* | > 0 $.

If $ | \lambda_* | \geqslant 1 $, then $ \Delta( \lambda ) $ is holomorphic on $ D_+ = \{ \lambda \in \mathbb{C} \colon | \lambda | < 1 \} $ and properties \ref{delta_limit}, \ref{real_valued} of Statement \ref{delta_prop} imply that $ \Delta \equiv 1 $ on $ D_+ $. If $ | \lambda_* | < 1 $, then $ \Delta( \lambda ) $ is holomorphic on the set $ D_+^h = \{ \lambda \in \mathbb{C} \colon | \lambda | < \lambda_* \} $ and properties \ref{delta_limit}, \ref{real_valued} imply that $ \Delta \equiv 1 $ on $ D_+^h $. On the other hand, $ \Delta( \lambda_* ) = 0 $, which contradicts property \ref{delta_continuity} from Statement \ref{delta_prop}. Thus we have proved that $ \Delta( \lambda ) \equiv 1 $ on $ D_+ $. Property \ref{delta_sym} of Statement \ref{delta_prop} implies that $ \Delta( \lambda ) \equiv 1 $ on $ D_- = \{ \lambda \in \mathbb{C} \colon | \lambda | > 1 \} $. Finally, from \ref{delta_continuity} of Statement \ref{delta_prop} it follows that $ \Delta \equiv 1 $ on $ \mathbb{C} $.

The function $ \mu $ is holomorphic on $ \mathbb{C} $ as follows from (\ref{mu_props}), (\ref{mu_limits}) and the established facts that $ \mathcal{E} = \varnothing $, $ b \equiv 0 $. The function $ \mu $ is also bounded due to the property (\ref{mu_limits}). From Liouville's theorem it follows that $ \mu \equiv 1 $. Then, finally, from (\ref{v_repres}), (\ref{mu_lambda_asympt}) we obtain that $ v \equiv 0 $.

\section{Proofs of Lemmas \ref{shift_lemma}, \ref{dyn_lemma}}
\label{lemmas_section}

\begin{proof}[Proof of Lemma \ref{shift_lemma}]
Formulas (\ref{a_eta}), (\ref{b_eta}) were derived in \cite{KN2} for the case of negative energy. Here we present their full derivation for both cases of positive and negative energies.

We note that $ \psi( z - \eta, \lambda ) $ satisfies (\ref{schrodinger}) with $ v_{ \eta }( z ) $ and has the asymptotics
\begin{equation*}
\psi( z - \eta, \lambda ) = e^{ \frac{ i \sqrt{ E } }{ 2 } ( \lambda ( \bar z - \bar \eta ) + ( z - \eta ) / \lambda ) } ( 1 + o( 1 ) ),
\end{equation*}
as $ | z | \to \infty $. Thus for Faddeev eigenfunction $ \psi_{ \eta }( z, \lambda ) $ corresponding to potential $ v_{ \eta }( z ) $ we obtain the following representation: $ \psi_{ \eta }( z, \lambda ) = e^{ \frac{ i \sqrt{E} }{ 2 } ( \lambda \bar \eta + \eta / \lambda ) }\psi( z - \eta, \lambda ) $. Consequently, for function $ \mu_{ \eta }( z, \lambda ) $ corresponding to function $ \psi_{ \eta }( z, \lambda ) $ and defined via (\ref{psi_asympt}) we have $ \mu_{ \eta }( z, \lambda ) = \mu( z - \eta, \lambda ) $.

For the scattering data we have
\begin{equation*}
a_{ \eta }( \lambda ) = \iint\limits_{ \mathbb{C} } v_{ \eta }( \zeta ) \mu_{ \eta }( \zeta, \lambda ) d \Re \zeta d \Im \zeta = \iint\limits_{ \mathbb{C} } v( \zeta - \eta ) \mu( \zeta - \eta, \lambda ) d \Re \zeta d \Im \zeta = a( \lambda )
\end{equation*}
and
\begin{multline}
b_{ \eta }( \lambda ) = \iint\limits_{ \mathbb{C} } \exp\left\{ \frac{ i \sqrt{E} }{ 2 } \left( 1 + ( \sgn E ) \frac{ 1 }{ \lambda \bar \lambda } \right) \left( ( \sgn E ) \bar \lambda \zeta + \lambda \bar \zeta \right) \right\} v_{ \eta }( \zeta ) \mu_{ \eta }( \zeta, \lambda ) d \Re \zeta d \Im \zeta = \\
= \iint\limits_{ \mathbb{C} } \exp\left\{ \frac{ i \sqrt{E} }{ 2 } \left( 1 + ( \sgn E ) \frac{ 1 }{ \lambda \bar \lambda } \right) \left( ( \sgn E ) \bar \lambda \zeta + \lambda \bar \zeta \right) \right\} v( \zeta - \eta ) \mu( \zeta - \eta, \lambda ) d \Re \zeta d \Im \zeta = \\
= \exp\left\{ \frac{ i \sqrt{E} }{ 2 } \left( 1 + ( \sgn E ) \frac{ 1 }{ \lambda \bar \lambda } \right) \left( ( \sgn E ) \bar \lambda \eta + \lambda \bar \eta \right) \right\} b( \lambda ).
\end{multline}

Similarly, to derive formulas (\ref{alpha_eta}), (\ref{beta_eta}), we note that $ \varphi( z - \eta, \lambda ) $ satisfies (\ref{schrodinger}) with $ v_{ \eta }( z ) $ and has the asymptotics
\begin{equation*}
\varphi( z - \eta, \lambda ) = e^{ \frac{ i \sqrt{ E } }{ 2 } ( \lambda ( \bar z - \bar \eta ) + ( z - \eta ) / \lambda ) } \left( \frac{ i \sqrt{ E } }{ 2 } \left( \lambda ( \bar z - \bar \eta ) - \frac{ 1 }{ \lambda }( z - \eta ) \right) + o( 1 ) \right),
\end{equation*}
as $ | z | \to \infty $. Thus for eigenfunction $ \varphi_{ \eta }( z, \lambda ) $ of equation (\ref{schrodinger}) with potential $ v_{ \eta }( z ) $ satisfying asymptotics (\ref{phi_asympt}) we obtain the following representation: $ \varphi_{ \eta }( z, \lambda ) = e^{ \frac{ i \sqrt{E} }{ 2 } ( \lambda \bar \eta + \eta / \lambda ) }( \varphi( z - \eta, \lambda ) + \frac{ i \sqrt{E} }{ 2 }  \left( \lambda \bar \eta - \frac{ 1 }{ \lambda } \eta \right) \psi( z - \eta, \lambda ) ) $. Consequently, for function $ \nu_{ \eta }( z, \lambda ) $ corresponding to function $ \varphi_{ \eta }( z, \lambda ) $ and defined via (\ref{phi_asympt}) we have $ \nu_{ \eta }( z, \lambda ) = \nu( z - \eta, \lambda ) + \frac{ i \sqrt{E} }{ 2 }  \left( \lambda \bar \eta - \frac{ 1 }{ \lambda } \eta \right) \mu( z - \eta, \lambda ) $.

For the scattering data we have
\begin{multline*}
\alpha_{ \eta }( \lambda ) = \iint\limits_{ \mathbb{C} } v_{ \eta }( \zeta ) \nu_{ \eta }( \zeta, \lambda ) d \Re \zeta d \Im \zeta = \\
= \iint\limits_{ \mathbb{C} } v( \zeta - \eta ) \nu( \zeta - \eta, \lambda ) d \Re \zeta d \Im \zeta + \frac{ i \sqrt{E} }{ 2 }  \left( \lambda \bar \eta - \frac{ 1 }{ \lambda } \eta \right) \iint\limits_{ \mathbb{C} } v( \zeta - \eta ) \mu( \zeta - \eta, \lambda ) d \Re \zeta d \Im \zeta = \\
= \alpha( \lambda ) + \frac{ i \sqrt{E} }{ 2 } \left( \lambda \bar \eta - \frac{ 1 }{ \lambda } \eta \right) a( \lambda )
\end{multline*}
and
\begin{multline}
\beta_{ \eta }( \lambda ) = \iint\limits_{ \mathbb{C} } \exp\left\{ \frac{ i \sqrt{E} }{ 2 } \left( 1 + ( \sgn E ) \frac{ 1 }{ \lambda \bar \lambda } \right) \left( ( \sgn E ) \bar \lambda \zeta + \lambda \bar \zeta \right) \right\} v_{ \eta }( \zeta ) \nu_{ \eta }( \zeta, \lambda ) d \Re \zeta d \Im \zeta = \\
= \iint\limits_{ \mathbb{C} } \exp\left\{ \frac{ i \sqrt{E} }{ 2 } \left( 1 + ( \sgn E ) \frac{ 1 }{ \lambda \bar \lambda } \right) \left( ( \sgn E ) \bar \lambda \zeta + \lambda \bar \zeta \right) \right\} v( \zeta - \eta ) \nu( \zeta - \eta, \lambda ) d \Re \zeta d \Im \zeta + \\
+ \frac{ i \sqrt{E} }{ 2 }  \left( \lambda \bar \eta - \frac{ 1 }{ \lambda } \eta \right) \iint\limits_{ \mathbb{C} } \exp\left\{ \frac{ i \sqrt{E} }{ 2 } \left( 1 + ( \sgn E ) \frac{ 1 }{ \lambda \bar \lambda } \right) \left( ( \sgn E ) \bar \lambda \zeta + \lambda \bar \zeta \right) \right\} v( \zeta - \eta ) \mu( \zeta - \eta, \lambda ) d \Re \zeta d \Im \zeta = \\
= \exp\left\{ \frac{ i \sqrt{E} }{ 2 } \left( 1 + ( \sgn E ) \frac{ 1 }{ \lambda \bar \lambda } \right) \left( ( \sgn E ) \bar \lambda \eta + \lambda \bar \eta \right) \right\} \left( \beta( \lambda ) + \frac{ i \sqrt{E} }{ 2 }  \left( \lambda \bar \eta - \frac{ 1 }{ \lambda } \eta \right) b( \lambda ) \right).
\end{multline}

\end{proof}

In order to prove Lemma \ref{dyn_lemma} we introduce the following operator
\begin{equation}
\label{t_operator}
T = \partial_{ t } - 8 \partial_{ z }^3 - 2 w \partial_{ z } - 8 \partial_{ \bar z }^3 - 2 \bar w \partial_{ \bar z },
\end{equation}
where $ w $ is defined via (\ref{w_def}), (\ref{w_decrease}) for some potential $ v $. Note that $ T = \partial_{ t } + A $, where $ A $ is the operator of  (\ref{A-B}).

We will need the following auxiliary lemma describing how $ T $ acts on the spectral solutions of the two-dimensional Schr\"odinger equation.
\begin{lemma}
\label{aux_lemma}
Let $ ( v, w ) $ satisfy conditions (\ref{smoothness})-(\ref{w_decrease}) and
\begin{equation*}
| \partial_{ t } v( x, t ) | \leqslant \frac{ \tilde q( t ) }{ ( 1 + | x | )^{ 3 + \varepsilon } }, \text{ for some } \tilde q( t ) > 0.
\end{equation*}

Suppose that for a certain $ \lambda \in \mathbb{C} $, $ | \lambda | \neq 1 $, and $ t $ belonging to a certain interval $ t \in ( t_1, t_2 ) $ the solution $ \psi( z, \lambda, t ) $ of (\ref{t_schrodinger}) with asymptotics (\ref{psi_asympt}) exists and is unique. Similarly, suppose that the solution $ \varphi( z, \lambda, t ) $ of (\ref{t_schrodinger}) with asymptotics (\ref{phi_asympt}) exists and is unique. Then
\begin{align}
\label{tpsi_asympt}
& T \psi =  i ( \sqrt{E} )^3 e^{ \frac{ i \sqrt{E} }{ 2 } ( \lambda \bar z + z / \lambda ) } \left( \left( \lambda^3 + \frac{ 1 }{ \lambda^3 } \right) + o( 1 ) \right), \text{ as } | z | \to \infty, \\
\label{tphi_asympt}
& T \varphi = i ( \sqrt{E} )^3 e^{ \frac{ i \sqrt{E} }{ 2 } ( \lambda \bar z + z / \lambda ) } \left( \frac{ i \sqrt{E} }{ 2 } \left( \lambda^3 + \frac{ 1 }{ \lambda^3 } \right) \left( \lambda \bar z - \frac{ 1 }{ \lambda } z \right) + 3 \left( \lambda^3 - \frac{ 1 }{ \lambda^3 } \right) + o( 1 ) \right), \quad \text{ as } | z | \to \infty.
\end{align}
\end{lemma}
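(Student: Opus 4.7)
The plan is to expand $T\psi$ and $T\varphi$ by the Leibniz rule, exploiting the factorizations $\psi = e^{\frac{i\sqrt{E}}{2}(\lambda\bar z + z/\lambda)}\mu$ and $\varphi = e^{\frac{i\sqrt{E}}{2}(\lambda\bar z + z/\lambda)}\nu$ provided by (\ref{psi_asympt})--(\ref{phi_asympt}). Each $\partial_z$ falling on the exponential produces a factor $\frac{i\sqrt{E}}{2\lambda}$ and each $\partial_{\bar z}$ a factor $\frac{i\sqrt{E}\lambda}{2}$, while the operators $\partial_t$, $-2w\partial_z$, $-2\bar w\partial_{\bar z}$ inside $T$ will contribute only at subleading order. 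Consequently, the constant-coefficient cubic piece $-8\partial_z^3 - 8\partial_{\bar z}^3$ should produce the leading asymptotics claimed in (\ref{tpsi_asympt}), (\ref{tphi_asympt}), and it remains to confirm that every other Leibniz cross-term and every lower-order piece is $o(1)$ times the pure exponential.

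For (\ref{tpsi_asympt}) I would first single out the Leibniz term in which all three derivatives land on the exponential: it equals $\bigl[-8(\tfrac{i\sqrt{E}}{2\lambda})^3 - 8(\tfrac{i\sqrt{E}\lambda}{2})^3\bigr] e^{\cdots}\mu = i(\sqrt{E})^3(\lambda^3 + \lambda^{-3})e^{\cdots}\mu$, and since $\mu\to 1$ this is precisely the right-hand side of (\ref{tpsi_asympt}) modulo $o(1)\cdot e^{\cdots}$. All other Leibniz contributions contain at least one factor $\partial_z^k\mu$ or $\partial_{\bar z}^k\mu$ with $k\ge 1$, each vanishing at infinity; the terms $-2w\partial_z\psi$, $-2\bar w\partial_{\bar z}\psi$ vanish because $w, \bar w\to 0$; and $\partial_t\psi = e^{\cdots}\partial_t\mu$ vanishes because $\partial_t\mu \to 0$ under the regularity hypothesis on $\partial_t v$. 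The analysis for (\ref{tphi_asympt}) follows the same template, except that $\nu$ grows linearly in $|z|$, so two Leibniz terms survive: the ``three derivatives on the exponential'' term supplies the $|z|$-order contribution $-\tfrac{E^2}{2}(\lambda^3 + \lambda^{-3})(\lambda\bar z - \lambda^{-1}z)e^{\cdots}$, and the ``two derivatives on the exponential, one on $\nu$'' term supplies the $O(1)$ contribution $3i(\sqrt{E})^3(\lambda^3 - \lambda^{-3})e^{\cdots}$, after using $\partial_z\nu \to -\tfrac{i\sqrt{E}}{2\lambda}$ and $\partial_{\bar z}\nu \to \tfrac{i\sqrt{E}\lambda}{2}$. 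Factoring $i(\sqrt{E})^3 e^{\cdots}$ out of both pieces and exploiting $i(\sqrt{E})^3\cdot\tfrac{i\sqrt{E}}{2} = -\tfrac{E^2}{2}$ reproduces the bracket in (\ref{tphi_asympt}). Here the decay of $-2w\partial_z\varphi$ uses that $w\cdot z \to 0$, which is guaranteed by $w = O(|z|^{-2})$ from Lemma \ref{w_lemma}.

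The principal obstacle will be justifying, with uniform-in-$z$ bounds, that the remainders $\partial_z^k\mu$, $\partial_{\bar z}^k\mu$ (for $1\le k\le 3$), the analogous derivatives of $\nu - \tfrac{i\sqrt{E}}{2}(\lambda\bar z - \lambda^{-1}z)$, and the time derivatives $\partial_t\mu$, $\partial_t\nu$ all vanish as $|z|\to\infty$. These estimates will be obtained by differentiating the integral equations (\ref{mu_int_equation}), (\ref{nu_int_equation}) (and their time derivatives) under the integral, applying the decay conditions (\ref{decrease}) together with their $t$-derivative analog, and invoking standard bounds on the Faddeev Green's function $g(z,\lambda)$ for $\lambda$ away from $\mathcal{E}\cup T\cup\{0\}$. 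A more conceptual alternative is to note, from the Lax representation (\ref{L-A-B})--(\ref{A-B}), that $(L-E)T\psi = 0$ and $(L-E)T\varphi = 0$, so that a uniqueness argument in the Faddeev class would pin down $T\psi = i(\sqrt{E})^3(\lambda^3+\lambda^{-3})\psi$ and $T\varphi = i(\sqrt{E})^3(\lambda^3+\lambda^{-3})\varphi + 3i(\sqrt{E})^3(\lambda^3 - \lambda^{-3})\psi$ exactly; however, such a uniqueness argument presupposes precisely the decay estimates above, so the direct Leibniz computation will be the cleaner path.
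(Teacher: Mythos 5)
Your proposal is correct and follows essentially the same route as the paper: reduce (\ref{tpsi_asympt}), (\ref{tphi_asympt}) via the Leibniz expansion and Lemma \ref{w_lemma} to the limit statements $\partial_t\mu\to 0$, $\partial_z^j\mu,\partial_{\bar z}^j\mu\to 0$ (and the analogous limits for $\nu$), and then establish these by differentiating the integral equations (\ref{mu_int_equation}), (\ref{nu_int_equation}) and using the decay of the Faddeev Green's function $g(z,\lambda)$ applied to $L^1\cap L^2$ densities. The coefficient computations and the identification of which cross-terms survive for $\varphi$ match the paper's argument.
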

\begin{proof}
First of all, due to Lemma \ref{w_lemma}, in order to demonstrate (\ref{tpsi_asympt}), (\ref{tphi_asympt}) it is sufficient to show that
\begin{align}
\label{mu_limit_prop}
& \partial_{ t } \mu \to 0, \quad \partial_{ z }^j \mu \to 0, \quad \partial_{ \bar z }^j \mu \to 0, \quad j = 1, 2, 3, \text{ as } | z | \to \infty, \\
\label{nu_limit_prop}
& \partial_{ t } \nu \to 0, \quad \partial_{z} \nu \to -\frac{ i \sqrt{E} }{ 2 \lambda }, \quad \partial_{ \bar z } \nu \to \frac{ i \sqrt{E} }{ 2 } \lambda, \quad \partial_{ z }^k \nu \to 0, \quad \partial_{ \bar z }^k \nu \to 0, \quad k = 2, 3, \text{ as } | z | \to \infty,
\end{align}
where $ \mu( z, \lambda, t ) = e^{ -\frac{ i \sqrt{E} }{ 2 }( \lambda \bar z + z / \lambda ) } \psi( z, \lambda, t ) $, $ \nu( z, \lambda, t ) = e^{ -\frac{ i \sqrt{E} }{ 2 }( \lambda \bar z + z / \lambda ) } \varphi( z, \lambda, t ) $. We will only prove properties (\ref{mu_limit_prop}). Properties (\ref{nu_limit_prop}) are proved similarly.

Function $ \mu $ is defined as the solution of the integral equation (\ref{mu_int_equation}), where the notation (\ref{green}) is used. Differentiating (\ref{mu_int_equation}) with respect to $ t $ yields the following integral equation for $ \partial_{ t } \mu $:
\begin{equation}
\label{tmu_equation}
\partial_{ t } \mu( z, \lambda, t ) = \iint\limits_{ \mathbb{C} } g( z - \zeta, \lambda ) \partial_{ t } v( \zeta, t ) \mu( \zeta, \lambda, t ) d \Re \zeta d \Im \zeta+ \iint\limits_{ \mathbb{C} } g( z - \zeta, \lambda ) v( \zeta, t ) \partial_{ t } \mu( \zeta, \lambda, t ) d \Re \zeta d \Im \zeta.
\end{equation}

Differentiating (\ref{mu_int_equation}) $ j $ times with respect to $ z $ yields the following integral equation for $ \partial^j_{ z } \mu $:
\begin{equation*}
\partial^j_{ z } \mu( z, \lambda, t ) = \iint\limits_{ \mathbb{C} } \partial^j_{ z } g( z - \zeta, \lambda ) v( \zeta, t ) \mu( \zeta, \lambda, t ) d \Re \zeta d \Im \zeta, \quad j = 1, 2, 3.
\end{equation*}
We integrate this equation by parts, taking into account property (\ref{decrease}) of function $ v $ and the fact that $ \partial^j_{ z } g( z - \zeta, \lambda ) = ( -1 )^j \partial^{ j }_{ \zeta } g( z - \zeta, \lambda ) $. Thus we obtain
\begin{equation}
\label{zmu_equation}
\partial^j_{ z } \mu( z, \lambda, t ) = \iint\limits_{ \mathbb{C} } g( z - \zeta, \lambda ) \partial^j_{ \zeta } ( v( \zeta, t ) \mu( \zeta, \lambda, t ) ) d \Re \zeta d \Im \zeta, \quad j = 1, 2, 3.
\end{equation}
Similarly, $ \partial^j_{ \bar z } \mu $ satisfies the following integral equation
\begin{equation}
\label{barzmu_equation}
\partial^j_{ \bar z } \mu( z, \lambda, t ) = \iint\limits_{ \mathbb{C} } g( z - \zeta, \lambda ) \partial^j_{ \bar \zeta } ( v( \zeta, t ) \mu( \zeta, \lambda, t ) ) d \Re \zeta d \Im \zeta, \quad j = 1, 2, 3.
\end{equation}

Equation (\ref{zmu_equation}) is an equation on the unknown function $ \partial^j_{ z } \mu $, where it is assumed that functions $ \partial^k_{ z } \mu $, $ k < j $, are already defined. Similarly equation (\ref{barzmu_equation}) is an equation on the unknown function $ \partial^j_{ \bar z } \mu $, where it is assumed that functions $ \partial^k_{ \bar z } \mu $, $ k < j $, are already defined. The assumptions of lemma imply that for each of the equations (\ref{tmu_equation}), (\ref{zmu_equation}), (\ref{barzmu_equation}) its solution exists, is unique and can be represented as $ ( 1 + | z | )^{ 2 + \varepsilon/2 } u( z, \lambda, t ) $ with some corresponding $ u( \cdot, \lambda, t ) \in L^2( \mathbb{C} ) $.

It was shown in \cite{Nov1} that the function $ g $ defined by (\ref{green}) possesses the following property: $ | g( z, \lambda ) | \leqslant \frac{ \const }{ | z | } $ for $ \forall \lambda \in \mathbb{C} $, $ | \lambda | \neq 1 $ and sufficiently large $ z $, and $ | g( z, \lambda ) | \leqslant \const \ln | z | $ for $ \forall \lambda \in \mathbb{C} $ and sufficiently small $ z $. This property implies that
\begin{equation}
\label{g_property}
\begin{aligned}
& \left| \iint\limits_{ \mathbb{C} } g( z - \zeta, \lambda ) U( \zeta ) d \Re \zeta d \Im \zeta \right| \to 0 \text{ as } | z | \to \infty \\
& \text{ for any } U \in L^1( \mathbb{C} )\cap L^2( \mathbb{C} ).
\end{aligned}
\end{equation}

Let us denote by $ \xi( z, \lambda, t ) $ the solution of any of the equations (\ref{mu_int_equation}), (\ref{tmu_equation})-(\ref{barzmu_equation}). As noted before, $ \xi( z, \lambda, t ) $ can be represented in the form $ \xi( z, \lambda, t ) = ( 1 + | z | )^{ 2 + \varepsilon / 2 } u( z, \lambda, t ) $ for some $ u( \cdot, \lambda, t ) \in L^2( \mathbb{C} ) $. Then from assumptions on $ v $ it follows that $ \partial^{ j }_{ z } v( \cdot, t ) \xi( \cdot, \lambda, t ) \in L^1( \mathbb{C} ) \cap \L^2( \mathbb{C} ) $, $ \partial^{ j }_{ \bar z } v( \cdot, t ) \xi( \cdot, \lambda, t ) \in L^1( \mathbb{C} ) \cap L^2( \mathbb{C} ) $ for $ j = 0, \dots, 3 $ and $ \partial_t v( \cdot, t ) \xi( \cdot, \lambda, t ) \in L^1( \mathbb{C} ) \cap L^2( \mathbb{C} ) $. Thus, from (\ref{g_property}) with $ U( \cdot ) = \partial_t v( \cdot, t ) \mu( \cdot, \lambda, t ) $ and $ U( \cdot ) = v( \cdot, t ) \partial_t \mu( \cdot, \lambda, t ) $ it follows that the right part of (\ref{tmu_equation}) tends to zero as $ | z | \to \infty $. Similarly, considering equations (\ref{zmu_equation}), (\ref{barzmu_equation}) consecutively we obtain that the right part of each of these equations tends to zero as $ | z | \to \infty $. Consequently, $ \partial_{ t } \mu \to 0 $, $ \partial_{ z }^j \mu \to 0 $, $ \partial_{ \bar z }^j \mu \to 0 $, $ j = 1, 2, 3 $, as $ | z | \to \infty $.

\end{proof}

\begin{proof}[Proof of Lemma \ref{dyn_lemma}]
Formulas (\ref{a_evolution}), (\ref{b_evolution}) have already been known in literature (see for example \cite{G2}). Since their derivation is similar to the derivation of formulas (\ref{alpha_evolution}), (\ref{beta_evolution}), we confine ourselves to the derivation of the latter. The derivation of analogs of formulas (\ref{a_evolution})-(\ref{beta_evolution}) for the case of zero energy can be found in \cite{BLMP1}.

Equation (\ref{NV}) represents a condition under which the following is true
\begin{equation}
\label{equiv_formulation}
[ T, L ] \eta = E T \eta, \quad \forall \eta \colon L \eta = E \eta,
\end{equation}
where $ L $ is defined in (\ref{schrodinger}) and $ T $ is defined in (\ref{t_operator}) (see \cite{M}, \cite{BLMP1}).

Let us take $ \eta = \varphi $, where $ \varphi $ is the solution of $ L \varphi = E \varphi $ with the asymptotics (\ref{phi_asympt}). Then (\ref{equiv_formulation}) implies
\begin{equation*}
L T \varphi = E \varphi.
\end{equation*}
From Lemma \ref{aux_lemma} we have that
\begin{equation*}
T \varphi = i ( \sqrt{E} )^3 e^{ \frac{ i \sqrt{E} }{ 2 } ( \lambda \bar z + z / \lambda ) } \left( \frac{ i \sqrt{E} }{ 2 } \left( \lambda^3 + \frac{ 1 }{ \lambda^3 } \right) \left( \lambda \bar z - \frac{ 1 }{ \lambda } z \right) + 3 \left( \lambda^3 - \frac{ 1 }{ \lambda^3 } \right) + o( 1 ) \right), \quad | z | \to \infty.
\end{equation*}
The uniqueness of the solution of (\ref{schrodinger}) with the asymptotics (\ref{psi_asympt}) at the considered value of $ \lambda $ implies that
\begin{equation*}
T \varphi = i ( \sqrt{E} )^3 \left( \lambda^3 + \frac{ 1 }{ \lambda^3 } \right) \varphi + 3 i ( \sqrt{E} )^3 \left( \lambda^3 - \frac{ 1 }{ \lambda^3 } \right) \psi.
\end{equation*}
In other words,
\begin{equation}
\label{partial_phi}
\partial_{ t } \varphi = 8 \partial_{z}^3 \varphi + 2 w \partial_{ z } \varphi + 8 \partial_{ \bar z }^3 \varphi + 2 \bar w \partial_{ \bar z } \varphi + i ( \sqrt{E} )^3 \left( \lambda^3 + \frac{ 1 }{ \lambda^3 } \right) \varphi + 3 i ( \sqrt{E} )^3 \left( \lambda^3 - \frac{ 1 }{ \lambda^3 } \right) \psi.
\end{equation}
Now we write $ \alpha( \lambda, t ) $ in the form
\begin{equation*}
\alpha( \lambda, t ) = \iint\limits_{ \mathbb{C} } e^{ -\frac{ i \sqrt{E} }{ 2 } ( \lambda \bar \zeta + \zeta / \lambda ) } v( \zeta, t ) \varphi( \zeta, \lambda, t ) d \Re \zeta d \Im \zeta
\end{equation*}
and compute its derivative with respect to time:
\begin{equation}
\label{partial_alpha}
\partial_{ t } \alpha( \lambda, t ) = \iint\limits_{ \mathbb{C} } e^{ -\frac{ i \sqrt{E} }{ 2 } ( \lambda \bar \zeta + \zeta / \lambda ) } \partial_{ t } v( \zeta, t ) \varphi( \zeta, \lambda, t ) d \Re \zeta d \Im \zeta + \iint\limits_{ \mathbb{C} } e^{ -\frac{ i \sqrt{E} }{ 2 } ( \lambda \bar \zeta + \zeta / \lambda ) } v( \zeta, t ) \partial_{ t } \varphi( \zeta, \lambda, t ) d \Re \zeta d \Im \zeta.
\end{equation}
Substituting (\ref{NV}) and (\ref{partial_phi}) into (\ref{partial_alpha}), integrating the resulting expression by parts and taking into account that $ - 4 \partial_{ \zeta } \partial_{ \bar \zeta } \varphi + v \varphi = E \varphi $, we obtain
\begin{equation}
\label{partial_concise_alpha}
\partial_{ t } \alpha( \lambda, t ) = 3 i ( \sqrt{E} )^3 \left( \lambda^3 - \frac{ 1 }{ \lambda^3 } \right) ( a( \lambda, t ) - \hat v( 0 ) )
\end{equation}
(see Appendix for the detailed derivation of this formula). Formulas (\ref{a_evolution}), (\ref{partial_concise_alpha}) yield (\ref{alpha_evolution}).

Similarly, we write $ \beta( \lambda, t ) $ in the form
\begin{equation*}
\beta( \lambda, t ) = \iint\limits_{ \mathbb{C} } e^{ \frac{ \sqrt{-E} }{ 2 } ( \bar \lambda \zeta + \bar \zeta / \bar \lambda ) } v( \zeta, t ) \varphi( \zeta, \lambda, t ) d \Re \zeta d \Im \zeta
\end{equation*}
and compute its derivative with respect to time:
\begin{equation}
\label{partial_beta}
\partial_{ t } \beta( \lambda, t ) = \iint\limits_{ \mathbb{C} } e^{ \frac{ \sqrt{-E} }{ 2 } ( \bar \lambda \zeta + \bar \zeta / \bar \lambda ) } \partial_{ t } v( \zeta, t ) \varphi( \zeta, \lambda, t ) d \Re \zeta d \Im \zeta + \iint\limits_{ \mathbb{C} } e^{ \frac{ \sqrt{-E} }{ 2 } ( \bar \lambda \zeta + \bar \zeta / \bar \lambda ) } v( \zeta, t ) \partial_{ t } \varphi( \zeta, \lambda, t ) d \Re \zeta d \Im \zeta.
\end{equation}
Substituting (\ref{NV}) and (\ref{partial_phi}) into (\ref{partial_beta}), integrating the resulting expression by parts and taking into account that $ - 4 \partial_{ \zeta } \partial_{ \bar \zeta } \varphi + v \varphi = E \varphi $, we obtain
\begin{equation}
\label{partial_concise_beta}
\partial_{ t } \beta( \lambda, t ) = i ( \sqrt{E} )^3 \left( \lambda^3 + \frac{ 1 }{ \lambda^3 } + ( \sgn E ) \left( \bar \lambda^3 + \frac{ 1 }{ \bar \lambda^3 } \right) \right) \beta( \lambda, t ) + 3 i ( \sqrt{E} )^3 \left( \lambda^3 - \frac{ 1 }{ \lambda^3 } \right) b( \lambda, t )
\end{equation}
(see Appendix for the detailed derivation of this formula). Using formula (\ref{b_evolution}), we obtain (\ref{beta_evolution}).
\end{proof}

\appendix
\section{Appendix}
Here we present the detailed derivation of formulas (\ref{partial_concise_alpha}), (\ref{partial_concise_beta}) proceeding from representations (\ref{partial_alpha}), (\ref{partial_beta}), respectively.

\bigskip
\bigskip
\noindent
\emph{Derivation of (\ref{partial_concise_alpha}).} Substituting (\ref{NV}) and (\ref{partial_phi}) into (\ref{partial_alpha}) yields
\begin{multline}
\label{sum_fourteen_alpha}
\partial_{ t } \alpha( \lambda, t ) = \lim\limits_{ R \to \infty } \Biggl( 8 \iint\limits_{ B_R } e^{ -\frac{ i \sqrt{E} }{ 2 } ( \lambda \bar \zeta + \frac{ \zeta }{ \lambda } ) } \, \partial_{ \zeta }^3 v \, \varphi \, d \Re \zeta \, d \Im \zeta + 8 \iint\limits_{ B_R } e^{ -\frac{ i \sqrt{E} }{ 2 } ( \lambda \bar \zeta + \frac{ \zeta }{ \lambda } ) } \, \partial_{ \bar \zeta }^3 v \, \varphi \, d \Re \zeta \, d \Im \zeta + \\
+ 2 \iint\limits_{ B_R } e^{ -\frac{ i \sqrt{E} }{ 2 } ( \lambda \bar \zeta + \frac{ \zeta }{ \lambda } ) } \, \partial_{ \zeta } v \, w \, \varphi \, d \Re \zeta \, d \Im \zeta + 2 \iint\limits_{ B_R } e^{ -\frac{ i \sqrt{E} }{ 2 } ( \lambda \bar \zeta + \frac{ \zeta }{ \lambda } ) } \, v \, \partial_{ \zeta } w \, \varphi \, d \Re \zeta \, d \Im \zeta + \\
+ 2 \iint\limits_{ B_R } e^{ -\frac{ i \sqrt{E} }{ 2 } ( \lambda \bar \zeta + \frac{ \zeta }{ \lambda } ) } \, \partial_{ \bar \zeta } v \, \bar w \, \varphi \, d \Re \zeta \, d \Im \zeta + 2 \iint\limits_{ B_R } e^{ -\frac{ i \sqrt{E} }{ 2 } ( \lambda \bar \zeta + \frac{ \zeta }{ \lambda } ) } \, v \, \partial_{ \bar \zeta } \bar w \, \varphi \, d \Re \zeta \, d \Im \zeta - \\
- 2 E \iint\limits_{ B_R } e^{ -\frac{ i \sqrt{E} }{ 2 } ( \lambda \bar \zeta + \frac{ \zeta }{ \lambda } ) } \, \partial_{ \zeta } w \, \varphi \, d \Re \zeta \, d \Im \zeta - 2 E \iint\limits_{ B_R } e^{ -\frac{ i \sqrt{E} }{ 2 } ( \lambda \bar \zeta + \frac{ \zeta }{ \lambda } ) } \, \partial_{ \bar \zeta } \bar w \, \varphi \, d \Re \zeta \, d \Im \zeta + \\
+ 8 \iint\limits_{ B_R } e^{ -\frac{ i \sqrt{E} }{ 2 } ( \lambda \bar \zeta + \frac{ \zeta }{ \lambda } ) } \, v \, \partial_{ \zeta }^{ 3 } \varphi \, d \Re \zeta \, d \Im \zeta + 8 \iint\limits_{ B_R } e^{ -\frac{ i \sqrt{E} }{ 2 } ( \lambda \bar \zeta + \frac{ \zeta }{ \lambda } ) } \, v \, \partial_{ \bar \zeta }^{ 3 } \varphi \, d \Re \zeta \, d \Im \zeta + \\
+ 2 \iint\limits_{ B_R } e^{ -\frac{ i \sqrt{E} }{ 2 } ( \lambda \bar \zeta + \frac{ \zeta }{ \lambda } ) } \, v \, w \, \partial_{ \zeta } \varphi \, d \Re \zeta \, d \Im \zeta + 2 \iint\limits_{ B_R } e^{ -\frac{ i \sqrt{E} }{ 2 } ( \lambda \bar \zeta + \frac{ \zeta }{ \lambda } ) } \, v \, \bar w \, \partial_{ \bar \zeta } \varphi \, d \Re \zeta \, d \Im \zeta + \\
+ i ( \sqrt{E} )^3 \left( \lambda^3 + \frac{ 1 }{ \lambda^3 } \right) \iint\limits_{ B_R } e^{ -\frac{ i \sqrt{E} }{ 2 } ( \lambda \bar \zeta + \frac{ \zeta }{ \lambda } ) } \, v \, \varphi \, d \Re \zeta \, d \Im \zeta + \\
+ 3 i ( \sqrt{E} )^3 \left( \lambda^3 - \frac{ 1 }{ \lambda^3 } \right) \iint\limits_{ B_R } e^{ -\frac{ i \sqrt{E} }{ 2 } ( \lambda \bar \zeta + \frac{ \zeta }{ \lambda } ) } \, v \, \psi \, d \Re \zeta \, d \Im \zeta \Biggr) = \lim\limits_{ R \to \infty } \sum\limits_{ i = 1 }^{ 14 } I_i.
\end{multline}
Integrating $ I_9 $ by parts yields
\begin{multline*}
\lim\limits_{ R \to \infty } I_9 = - \frac{ i ( \sqrt{E} )^3 }{ \lambda^3 } \iint\limits_{ \mathbb{C} } e^{ -\frac{ i \sqrt{E} }{ 2 } ( \lambda \bar \zeta + \frac{ \zeta }{ \lambda } ) } \, v \, \varphi \, d \Re \zeta \, d \Im \zeta + \frac{ 6 E }{ \lambda^2 } \iint\limits_{ \mathbb{C} } e^{ -\frac{ i \sqrt{E} }{ 2 } ( \lambda \bar \zeta + \frac{ \zeta }{ \lambda } ) } \, \partial_{ \zeta } v \, \varphi \, d \Re \zeta \, d \Im \zeta + \\
 +\frac{ 12 i \sqrt{E} }{ \lambda } \iint\limits_{ \mathbb{C} } e^{ -\frac{ i \sqrt{E} }{ 2 } ( \lambda \bar \zeta + \frac{ \zeta }{ \lambda } ) } \, \partial_{ \zeta }^2 v \, \varphi \, d \Re \zeta \, d \Im \zeta - 8 \iint\limits_{ \mathbb{C} } e^{ -\frac{ i \sqrt{E} }{ 2 } ( \lambda \bar \zeta + \frac{ \zeta }{ \lambda } ) } \, \partial_{ \zeta }^3 v \, \varphi \, d \Re \zeta \, d \Im \zeta.
\end{multline*}
In this way it can be obtained that
\begin{multline}
\label{first_sum_alpha}
\lim\limits_{ R \to \infty } ( I_1 + I_2 + I_9 + I_{ 10 } + I_{ 13 } ) =\\
= \frac{ 6 E }{ \lambda^2 } \iint\limits_{ \mathbb{C} } e^{ -\frac{ i \sqrt{E} }{ 2 } ( \lambda \bar \zeta + \frac{ \zeta }{ \lambda } ) } \, \partial_{ \zeta } v \, \varphi \, d \Re \zeta \, d \Im \zeta + \frac{ 12 i \sqrt{E} }{ \lambda } \iint\limits_{ \mathbb{C} } e^{ \frac{ - i \sqrt{E} }{ 2 } ( \lambda \bar \zeta + \frac{ \zeta }{ \lambda } ) } \, \partial_{ \zeta }^2 v \, \varphi \, d \Re \zeta \, d \Im \zeta + \\
 +6 E \lambda^2 \iint\limits_{ \mathbb{C} } e^{ -\frac{ i \sqrt{E} }{ 2 } ( \lambda \bar \zeta + \frac{ \zeta }{ \lambda } ) } \, \partial_{ \bar \zeta } v \, \varphi \, d \Re \zeta \, d \Im \zeta + 12 i \sqrt{E} \lambda \iint\limits_{ \mathbb{C} } e^{ -\frac{ i \sqrt{E} }{ 2 } ( \lambda \bar \zeta + \frac{ \zeta }{ \lambda } ) } \, \partial_{ \bar \zeta }^2 v \, \varphi \, d \Re \zeta \, d \Im \zeta.
\end{multline}

Integrating $ I_{ 11 } $ by parts we obtain
\begin{multline*}
\lim\limits_{ R \to \infty } ( I_{ 11 } + I_{ 7 } ) = - 2 \iint\limits_{ \mathbb{C} } e^{ -\frac{ i \sqrt{E} }{ 2 } ( \lambda \bar \zeta + \frac{ \zeta }{ \lambda } ) } \, \partial_{ \zeta } v \, w \, \varphi \, d \Re \zeta \, d \Im \zeta + \\
+ \lim\limits_{ R \to \infty } \Biggl( \frac{ i \sqrt{E} }{ \lambda } \iint\limits_{ B_R } e^{ -\frac{ i \sqrt{E} }{ 2 } ( \lambda \bar \zeta + \frac{ \zeta }{ \lambda } ) } \, v \, w \, \varphi \, d \Re \zeta \, d \Im \zeta - 2 E \iint\limits_{ B_R } e^{ -\frac{ i \sqrt{E} }{ 2 } ( \lambda \bar \zeta + \frac{ \zeta }{ \lambda } ) } \, \partial_{ \zeta } w \, \varphi \, d \Re \zeta \, d \Im \zeta \Biggr) - \\
- 2 \iint\limits_{ \mathbb{C} } e^{ -\frac{ i \sqrt{E} }{ 2 } ( \lambda \bar \zeta + \frac{ \zeta }{ \lambda } ) } \, v \, \partial_{ \zeta } w \, \varphi \, d \Re \zeta \, d \Im \zeta.
\end{multline*}

Consider
\begin{equation*}
\tilde I = \frac{ i \sqrt{E} }{ \lambda } \iint\limits_{ B_R } e^{ -\frac{ i \sqrt{E} }{ 2 } ( \lambda \bar \zeta + \frac{ \zeta }{ \lambda } ) } \, v \, w \, \varphi \, d \Re \zeta \, d \Im \zeta.
\end{equation*}
Taking into account that $ - 4 \partial_{ \zeta } \partial_{ \bar \zeta } \varphi + v \varphi = E \varphi $ we obtain
that
\begin{equation*}
\tilde I = \frac{ i ( \sqrt{E} )^3 }{ \lambda } \iint\limits_{ B_R } e^{ -\frac{ i \sqrt{E} }{ 2 } ( \lambda \bar \zeta + \frac{ \zeta }{ \lambda } ) } \, w \, \varphi \, d \Re \zeta \, d \Im \zeta + \frac{ 4 i \sqrt{E} }{ \lambda } \iint\limits_{ B_R } e^{ -\frac{ i \sqrt{E} }{ 2 } ( \lambda \bar \zeta + \frac{ \zeta }{ \lambda } ) } \, w \, \partial_{ \zeta } \partial_{ \bar \zeta } \varphi \, d \Re \zeta \, d \Im \zeta = \tilde I_{ (1) } + \tilde I_{ (2) }.
\end{equation*}

Applying the Stokes theorem to $ \tilde I_{ (2) } $ we get that
\begin{equation}
\label{app_i_two}
\tilde I_{ (2) } = - \frac{ 2 \sqrt{ E } }{ \lambda } \int\limits_{ C_R }  e^{ -\frac{ i \sqrt{E} }{ 2 } ( \lambda \bar \zeta + \frac{ \zeta }{ \lambda } ) } \, w \, \partial_{ \bar \zeta } \varphi \, d \bar \zeta  - \frac{ 4 i \sqrt{E} }{ \lambda } \iint\limits_{ B_R } \partial_{ \zeta } \left( e^{ -\frac{ i \sqrt{E} }{ 2 } ( \lambda \bar \zeta + \frac{ \zeta }{ \lambda } ) } \, w \right) \, \partial_{ \bar \zeta } \varphi \, d \Re \zeta \, d \Im \zeta,
\end{equation}
where $ C_R $ is the boundary of $ B_R $.

Note that
\begin{equation}
\label{app_part_phi_as}
\partial_{ \bar z } \varphi( z, \lambda ) = \frac{ i \sqrt{E} }{ 2 } \lambda e^{ \frac{ i \sqrt{E} }{ 2 } \left( \lambda \bar z + \frac{ z }{ \lambda } \right) } \left( \frac{ i \sqrt{E} }{ 2 } \left( \lambda \bar z - \frac{ 1 }{ \lambda } z \right) + 1 + o( 1 ) \right), \text{ as } z \to \infty.
\end{equation}
In addition, note that
\begin{equation}
\label{app_int_vanish}
\int\limits_{ C_R } \frac{ \zeta d \bar \zeta }{ \zeta^2 } = 0, \quad \int\limits_{ C_R } \frac{ \bar \zeta d \bar \zeta }{ \zeta^2 } = 0.
\end{equation}
Thus from Lemma \ref{w_lemma}, (\ref{app_part_phi_as}), (\ref{app_int_vanish}) it follows that as $ R \to \infty $ the first summand in the right-hand side of (\ref{app_i_two}) vanishes. We apply the Stokes theorem to the second summand:
\begin{multline}
\label{app_sec_sum}
- \frac{ 4 i \sqrt{E} }{ \lambda } \iint\limits_{ B_R } \partial_{ \zeta } \left( e^{ -\frac{ i \sqrt{E} }{ 2 } ( \lambda \bar \zeta + \frac{ \zeta }{ \lambda } ) } \, w \right) \, \partial_{ \bar \zeta } \varphi \, d \Re \zeta \, d \Im \zeta = \frac{ i E }{ \lambda^2 } \int\limits_{ C_R } e^{ -\frac{ i \sqrt{E} }{ 2 } ( \lambda \bar \zeta + \frac{ \zeta }{ \lambda } ) } \, w \, \varphi \, d \zeta - \\
- \frac{ 2 \sqrt{E} }{ \lambda } \int\limits_{ C_R } e^{ -\frac{ i \sqrt{E} }{ 2 } ( \lambda \bar \zeta + \frac{ \zeta }{ \lambda } ) } \,\partial_{ \zeta } w \, \varphi \, d \zeta + \frac{ 4 i \sqrt{E} }{ \lambda } \iint\limits_{ B_R } \partial_{ \zeta } \partial_{ \bar \zeta } \left( e^{ -\frac{ i \sqrt{E} }{ 2 } ( \lambda \bar \zeta + \frac{ \zeta }{ \lambda } ) } \, w \right) \, \varphi \, d \Re \zeta \, d \Im \zeta.
\end{multline}
From Lemma \ref{w_lemma} and asymptotics of $ \varphi( z, \lambda ) $ as $ z \to \infty $ it follows that the second summand in the right-hand side of (\ref{app_sec_sum}) vanishes as $ R \to \infty $. For the first summand we obtain
\begin{equation*}
\frac{ i E }{ \lambda^2 } \int\limits_{ C_R } e^{ -\frac{ i \sqrt{E} }{ 2 } ( \lambda \bar \zeta + \frac{ \zeta }{ \lambda } ) } \, w \, \varphi \, d \zeta \sim \frac{ ( \sqrt{E} )^3 }{ 2 \lambda^3 } \frac{ 3 \hat v( 0 ) }{ \pi } \int\limits_{ C_R } \frac{ d \zeta }{ \zeta } = 3 \frac{ i ( \sqrt{E} )^3 }{ \lambda^3 } \hat v( 0 ) \text{ as } R \to \infty.
\end{equation*}

Finally, for $ I_{ 11 } + I_{ 7 } $ we obtain that
\begin{multline*}
\lim\limits_{ R \to \infty } ( I_{ 11 } + I_{ 7 } ) = - 2 \iint\limits_{ \mathbb{C} } e^{ -\frac{ i \sqrt{E} }{ 2 } ( \lambda \bar \zeta + \frac{ \zeta }{ \lambda } ) } \, \partial_{ \zeta } v \, w \, \varphi \, d \Re \zeta \, d \Im \zeta - \frac{ 6 E }{ \lambda^2 } \iint\limits_{ \mathbb{C} } e^{ -\frac{ i \sqrt{E} }{ 2 } ( \lambda \bar \zeta + \frac{ \zeta }{ \lambda } ) } \, \partial_{ \zeta } v \, \varphi \, d \Re \zeta \, d \Im \zeta - \\
- \frac{ 12 i \sqrt{E} }{ \lambda } \iint\limits_{ \mathbb{C} } e^{ -\frac{ i \sqrt{E} }{ 2 } ( \lambda \bar \zeta + \frac{ \zeta }{ \lambda } ) } \, \partial_{ \zeta }^2 \, v \, \varphi \, d \Re \zeta \, d \Im \zeta - 2 \iint\limits_{ \mathbb{C} } e^{ -\frac{ i \sqrt{E} }{ 2 } ( \lambda \bar \zeta + \frac{ \zeta }{ \lambda } ) } \, v \, \partial_{ \zeta } w \, \varphi \, d \Re \zeta \, d \Im \zeta + \\
+ 3 \frac{ i ( \sqrt{E} )^3 }{ \lambda^3 } \hat v( 0 ).
\end{multline*}

Thus it can be obtained that
\begin{multline}
\label{second_sum_alpha}
\lim\limits_{ R \to \infty }( I_{ 3 } + I_{ 4 } + I_{ 5 } + I_{ 6 } + I_{ 7 } + I_{ 8 } + I_{ 11 } + I_{ 12 } ) = \\
= -\frac{ 6 E }{ \lambda^2 } \iint\limits_{ \mathbb{C} } e^{ -\frac{ i \sqrt{E} }{ 2 } ( \lambda \bar \zeta + \frac{ \zeta }{ \lambda } ) } \, \partial_{ \zeta } v \, \varphi \, d \Re \zeta \, d \Im \zeta - \frac{ 12 i \sqrt{E} }{ \lambda } \iint\limits_{ \mathbb{C} } e^{ -\frac{ i \sqrt{E} }{ 2 } ( \lambda \bar \zeta + \frac{ \zeta }{ \lambda } ) } \, \partial_{ \zeta }^2 v \, \varphi \, d \Re \zeta \, d \Im \zeta - \\
- 6 E \lambda^2 \iint\limits_{ \mathbb{C} } e^{ -\frac{ i \sqrt{E} }{ 2 } ( \lambda \bar \zeta + \frac{ \zeta }{ \lambda } ) } \, \partial_{ \bar \zeta  } v \, \varphi \, d \Re \zeta \, d \Im \zeta - 12 i \sqrt{E} \lambda \iint\limits_{ \mathbb{C} } e^{ -\frac{ i \sqrt{E} }{ 2 } ( \lambda \bar \zeta + \frac{ \zeta }{ \lambda } ) } \, \partial_{ \bar \zeta }^2 v \, \varphi \, d \Re \zeta \, d \Im \zeta -\\
- 3 i ( \sqrt{E} )^3 \left( \lambda^3 - \frac{ 1 }{ \lambda^3 } \right) \hat v( 0 ).
\end{multline}
Finally,
\begin{equation}
\label{fourteen_alpha}
I_{ 14 } = 3 i ( \sqrt{E} )^3 \left( \lambda^3 - \frac{ 1 }{ \lambda^3 } \right) a( \lambda, t )
\end{equation}
and thus from (\ref{sum_fourteen_alpha})-(\ref{fourteen_alpha}) we obtain formula (\ref{partial_concise_alpha}).

\bigskip
\bigskip
\noindent
\emph{Derivation of (\ref{partial_concise_beta}).} Similarly, the formula (\ref{partial_concise_beta}) can be derived. Substituting (\ref{NV}) and (\ref{partial_phi}) into (\ref{partial_beta}) yields
{\allowdisplaybreaks
\begin{multline}
\label{sum_fourteen_beta}
\partial_{ t } \beta( \lambda, t ) = \lim\limits_{ R \to \infty } \Biggl( 8 \iint\limits_{ B_R } e^{ \frac{ \sqrt{ -E } }{ 2 } ( \bar \lambda \zeta + \frac{ \bar \zeta }{ \bar \lambda } ) } \, \partial_{ \zeta }^3 v \, \varphi \, d \Re \zeta \, d \Im \zeta + 8 \iint\limits_{ B_R } e^{ \frac{ \sqrt{ -E } }{ 2 } ( \bar \lambda \zeta + \frac{ \bar \zeta }{ \bar \lambda } ) } \, \partial_{ \bar \zeta }^3 v \, \varphi \, d \Re \zeta \, d \Im \zeta + \\
+ 2 \iint\limits_{ B_R } e^{ \frac{ \sqrt{ -E } }{ 2 } ( \bar \lambda \zeta + \frac{ \bar \zeta }{ \bar \lambda } ) } \, \partial_{ \zeta } v \, w \, \varphi \, d \Re \zeta \, d \Im \zeta + 2 \iint\limits_{ B_R } e^{ \frac{ \sqrt{ -E } }{ 2 } ( \bar \lambda \zeta + \frac{ \bar \zeta }{ \bar \lambda } ) } \, v \, \partial_{ \zeta } w \, \varphi \, d \Re \zeta \, d \Im \zeta + \\
+ 2 \iint\limits_{ B_R } e^{ \frac{ \sqrt{ -E } }{ 2 } ( \bar \lambda \zeta + \frac{ \bar \zeta }{ \bar \lambda } ) } \, \partial_{ \bar \zeta } v \, \bar w \, \varphi \, d \Re \zeta \, d \Im \zeta + 2 \iint\limits_{ B_R } e^{ \frac{ \sqrt{ -E } }{ 2 } ( \bar \lambda \zeta + \frac{ \bar \zeta }{ \bar \lambda } ) } \, v \, \partial_{ \bar \zeta } \bar w \, \varphi \, d \Re \zeta \, d \Im \zeta - \\
- 2 E \iint\limits_{ B_R } e^{ \frac{ \sqrt{ -E } }{ 2 } ( \bar \lambda \zeta + \frac{ \bar \zeta }{ \bar \lambda } ) } \, \partial_{ \zeta } w \, \varphi \, d \Re \zeta \, d \Im \zeta - 2 E \iint\limits_{ B_R } e^{ \frac{ \sqrt{ -E } }{ 2 } ( \bar \lambda \zeta + \frac{ \bar \zeta }{ \bar \lambda } ) } \, \partial_{ \bar \zeta } \bar w \, \varphi \, d \Re \zeta \, d \Im \zeta + \\
+ 8 \iint\limits_{ B_R } e^{ \frac{ \sqrt{ -E } }{ 2 } ( \bar \lambda \zeta + \frac{ \bar \zeta }{ \bar \lambda } ) } \, v \, \partial_{ \zeta }^{ 3 } \varphi \, d \Re \zeta \, d \Im \zeta + 8 \iint\limits_{ B_R } e^{ \frac{ \sqrt{ -E } }{ 2 } ( \bar \lambda \zeta + \frac{ \bar \zeta }{ \bar \lambda } ) } \, v \, \partial_{ \bar \zeta }^{ 3 } \varphi \, d \Re \zeta \, d \Im \zeta + \\
+ 2 \iint\limits_{ B_R } e^{ \frac{ \sqrt{ -E } }{ 2 } ( \bar \lambda \zeta + \frac{ \bar \zeta }{ \bar \lambda } ) } \, v \, w \, \partial_{ \zeta } \varphi \, d \Re \zeta \, d \Im \zeta + 2 \iint\limits_{ B_R } e^{ \frac{ \sqrt{ -E } }{ 2 } ( \bar \lambda \zeta + \frac{ \bar \zeta }{ \bar \lambda } ) } \, v \, \bar w \, \partial_{ \bar \zeta } \varphi \, d \Re \zeta \, d \Im \zeta + \\
+ i ( \sqrt{E} )^3 \left( \lambda^3 + \frac{ 1 }{ \lambda^3 } \right) \iint\limits_{ B_R } e^{ \frac{ \sqrt{ -E } }{ 2 } ( \bar \lambda \zeta + \frac{ \bar \zeta }{ \bar \lambda } ) } \, v \, \varphi \, d \Re \zeta \, d \Im \zeta + \\
+ 3 i ( \sqrt{E} )^3 \left( \lambda^3 - \frac{ 1 }{ \lambda^3 } \right) \iint\limits_{ B_R } e^{ \frac{ \sqrt{ -E } }{ 2 } ( \bar \lambda \zeta + \frac{ \bar \zeta }{ \bar \lambda } ) } \, v \, \psi \, d \Re \zeta \, d \Im \zeta \Biggr) = \lim\limits_{ R \to \infty } \sum\limits_{ i = 1 }^{ 14 } J_i.
\end{multline} }
Integrating $ J_9 $ by parts yields
\begin{multline*}
\lim\limits_{ R \to \infty } J_9 = - \bar \lambda^3 ( \sqrt{ -E } )^3 \iint\limits_{ \mathbb{C} } e^{ \frac{ \sqrt{ -E } }{ 2 } ( \bar \lambda \zeta + \frac{ \bar \zeta }{ \bar \lambda } ) } \, v \, \varphi \, d \Re \zeta \, d \Im \zeta + 6 \bar \lambda^2 E \iint\limits_{ \mathbb{C} } e^{ \frac{ \sqrt{ -E } }{ 2 } ( \bar \lambda \zeta + \frac{ \bar \zeta }{ \bar \lambda } ) } \, \partial_{ \zeta } v \, \varphi \, d \Re \zeta \, d \Im \zeta - \\
- 12 \bar \lambda \sqrt{ -E } \iint\limits_{ \mathbb{C} } e^{ \frac{ \sqrt{ -E } }{ 2 } ( \bar \lambda \zeta + \frac{ \bar \zeta }{ \bar \lambda } ) } \, \partial_{ \zeta }^2 v \, \varphi \, d \Re \zeta \, d \Im \zeta - 8 \iint\limits_{ \mathbb{C} } e^{ \frac{ \sqrt{ -E } }{ 2 } ( \bar \lambda \zeta + \frac{ \bar \zeta }{ \bar \lambda } ) } \, \partial_{ \zeta }^3 v \, \varphi \, d \Re \zeta \, d \Im \zeta.
\end{multline*}
In this way it can be obtained that
\begin{multline}
\label{first_sum_beta}
\lim\limits_{ R \to \infty } ( J_1 + J_2 + J_9 + J_{ 10 } + J_{ 13 } ) = i ( \sqrt{E} )^3 \left( \lambda^3 + \frac{ 1 }{ \lambda^3 } + ( \sgn E ) \left( \bar \lambda^3 + \frac{ 1 }{ \bar \lambda^3 } \right) \right) \beta( \lambda, t ) + \\
+ 6 E \bar \lambda^2 \iint\limits_{ \mathbb{C} } e^{ \frac{ \sqrt{ -E } }{ 2 } ( \bar \lambda \zeta + \frac{ \bar \zeta }{ \bar \lambda } ) } \, \partial_{ \zeta } v \, \varphi \, d \Re \zeta \, d \Im \zeta - 12 \sqrt{ -E } \bar \lambda \iint\limits_{ \mathbb{C} } e^{ \frac{ \sqrt{ -E } }{ 2 } ( \bar \lambda \zeta + \frac{ \bar \zeta }{ \bar \lambda } ) } \, \partial_{ \zeta }^2 v \, \varphi \, d \Re \zeta \, d \Im \zeta + \\
+ \frac{ 6 E }{ \bar \lambda^2 } \iint\limits_{ \mathbb{C} } e^{ \frac{ \sqrt{ -E } }{ 2 } ( \bar \lambda \zeta + \frac{ \bar \zeta }{ \bar \lambda } ) } \, \partial_{ \bar \zeta } v \, \varphi \, d \Re \zeta \, d \Im \zeta - \frac{ 12 \sqrt{-E} }{ \bar \lambda } \iint\limits_{ \mathbb{C} } e^{ \frac{ \sqrt{ -E } }{ 2 } ( \bar \lambda \zeta + \frac{ \bar \zeta }{ \bar \lambda } ) } \, \partial_{ \bar \zeta }^2 v \, \varphi \, d \Re \zeta \, d \Im \zeta.
\end{multline}

Integrating $ J_{ 11 } $ by parts we obtain
\begin{multline*}
\lim\limits_{ R \to \infty } ( J_{ 11 } + J_{ 7 } ) = - 2 \iint\limits_{ \mathbb{C} } e^{ \frac{ \sqrt{-E} }{ 2 } ( \bar \lambda \zeta + \frac{ \bar \zeta }{ \bar \lambda } ) } \, \partial_{ \zeta } v \, w \, \varphi \, d \Re \zeta \, d \Im \zeta + \\
+ \lim\limits_{ R \to \infty } \Biggl(  - \sqrt{- E } \bar \lambda \iint\limits_{ B_R } e^{ \frac{ \sqrt{-E} }{ 2 } ( \bar \lambda \zeta + \frac{ \bar \zeta }{ \bar \lambda } ) } \, v \, w \, \varphi \, d \Re \zeta \, d \Im \zeta - \\
- 2 E \iint\limits_{ B_R } e^{ \frac{ \sqrt{-E} }{ 2 } ( \bar \lambda \zeta + \frac{ \bar \zeta }{ \bar \lambda } ) } \, \partial_{ \zeta } w \, \varphi \, d \Re \zeta \, d \Im \zeta \Biggr) - 2 \iint\limits_{ \mathbb{C} } e^{ \frac{ \sqrt{-E} }{ 2 } ( \bar \lambda \zeta + \frac{ \bar \zeta }{ \bar \lambda } ) } \, v \, \partial_{ \zeta } w \, \varphi \, d \Re \zeta \, d \Im \zeta.
\end{multline*}

Consider
\begin{equation*}
\tilde J = - \sqrt{- E } \bar \lambda \iint\limits_{ B_R } e^{ \frac{ \sqrt{-E} }{ 2 } ( \bar \lambda \zeta + \frac{ \bar \zeta }{ \bar \lambda } ) } \, v \, w \, \varphi \, d \Re \zeta \, d \Im \zeta.
\end{equation*}
Taking into account that $ - 4 \partial_{ \zeta } \partial_{ \bar \zeta } \varphi + v \varphi = E \varphi $ we obtain
that
\begin{multline*}
\tilde J = ( \sqrt{-E} )^3 \bar \lambda \iint\limits_{ B_R } e^{ \frac{ \sqrt{-E} }{ 2 } ( \bar \lambda \zeta + \frac{ \bar \zeta }{ \bar \lambda } ) } \, w \, \varphi \, d \Re \zeta \, d \Im \zeta - \\
- 4 \sqrt{- E } \bar \lambda \iint\limits_{ B_R } e^{ \frac{ \sqrt{-E} }{ 2 } ( \bar \lambda \zeta + \frac{ \bar \zeta }{ \bar \lambda } ) } \, w \, \partial_{ \zeta } \partial_{ \bar \zeta } \varphi \, d \Re \zeta \, d \Im \zeta = \tilde J_{ (1) } + \tilde J_{ (2) }.
\end{multline*}

Applying the Stokes theorem to $ \tilde J_{ (2) } $ we get that
\begin{multline*}
\tilde J_{ (2) } = - 2 i \sqrt{- E } \bar \lambda \int\limits_{ C_R } e^{ \frac{ \sqrt{-E} }{ 2 } ( \bar \lambda \zeta + \frac{ \bar \zeta }{ \bar \lambda } ) } \, w \, \partial_{ \bar \zeta } \varphi \, d \bar \zeta  + \\
+ 4 \sqrt{- E } \bar \lambda \iint\limits_{ B_R } \partial_{ \zeta } \left( e^{ \frac{ \sqrt{-E} }{ 2 } ( \bar \lambda \zeta + \frac{ \bar \zeta }{ \bar \lambda } ) } \, w \right) \, \partial_{ \bar \zeta } \varphi \, d \Re \zeta \, d \Im \zeta = \tilde J_{ (21) } + \tilde J_{ (22) },
\end{multline*}
where $ C_R $ is the boundary of $ B_R $.

From asymptotics (\ref{app_part_phi_as}) and Lemma \ref{w_lemma} it follows that $ J_{ ( 21 ) } $ is an integral of the stationary phase type and thus it vanishes as $ R \to \infty $.

Now let us apply the Stokes theorem to $ \tilde J_{ (22) } $:
\begin{multline}
\label{app_j22}
\tilde J_{ (22) } = - 2 i \sqrt{- E } \bar \lambda \int\limits_{ C_R } e^{ \frac{ \sqrt{-E} }{ 2 } ( \bar \lambda \zeta + \frac{ \bar \zeta }{ \bar \lambda } ) } \, \partial_{ \zeta } w \, \varphi \, d \zeta + \\
+ i E \bar \lambda^2 \int\limits_{ C_R } e^{ \frac{ \sqrt{-E} }{ 2 } ( \bar \lambda \zeta + \frac{ \bar \zeta }{ \bar \lambda } ) } \, w \, \varphi \, d \zeta - 4 \sqrt{- E } \bar \lambda \iint\limits_{ B_R } \partial_{ \zeta } \partial_{ \bar \zeta } \left( e^{ \frac{ \sqrt{-E} }{ 2 } ( \bar \lambda \zeta + \frac{ \bar \zeta }{ \bar \lambda } ) } \, w \right) \, \varphi \, d \Re \zeta \, d \Im \zeta.
\end{multline}

Treating the first two summands in the right-hand side of (\ref{app_j22}) in the similar way as we treated $ \tilde J_{ (21) } $, we obtain that
\begin{equation*}
\tilde J_{ ( 2 ) } \sim \tilde J_{ (22) } \sim - 4\sqrt{- E } \bar \lambda \iint\limits_{ B_R } \partial_{ \zeta } \partial_{ \bar \zeta } \left( e^{ \frac{ \sqrt{-E} }{ 2 } ( \bar \lambda \zeta + \frac{ \bar \zeta }{ \bar \lambda } ) } \, w \right) \, \varphi \, d \Re \zeta \, d \Im \zeta, \text{ as } R \to \infty.
\end{equation*}

Finally, for $ J_{ 11 } + J_{ 7 } $ we obtain that
\begin{multline*}
\lim\limits_{ R \to \infty } ( J_{ 11 } + J_{ 7 } ) = - 2 \iint\limits_{ \mathbb{C} } e^{ \frac{ \sqrt{-E} }{ 2 } ( \bar \lambda \zeta + \frac{ \bar \zeta }{ \bar \lambda } ) } \, \partial_{ \zeta } v \, w \, \varphi \, d \Re \zeta \, d \Im \zeta - \\
- 6 E \bar \lambda^2 \iint\limits_{ \mathbb{C} } e^{ \frac{ \sqrt{-E} }{ 2 } ( \bar \lambda \zeta + \frac{ \bar \zeta }{ \bar \lambda } ) } \, \partial_{ \zeta } v \, \varphi \, d \Re \zeta \, d \Im \zeta + 12 \sqrt{- E } \bar \lambda \iint\limits_{ \mathbb{C} } e^{ \frac{ \sqrt{-E} }{ 2 } ( \bar \lambda \zeta + \frac{ \bar \zeta }{ \bar \lambda } ) } \, \partial_{ \zeta }^2 \, v \, \varphi \, d \Re \zeta \, d \Im \zeta - \\
- 2 \iint\limits_{ \mathbb{C} } e^{ \frac{ \sqrt{-E} }{ 2 } ( \bar \lambda \zeta + \frac{ \bar \zeta }{ \bar \lambda } ) } \, v \, \partial_{ \zeta } w \, \varphi \, d \Re \zeta \, d \Im \zeta.
\end{multline*}

Thus it can be obtained that
\begin{multline}
\label{second_sum_beta}
\lim\limits_{ R \to \infty } ( J_{ 3 } + J_{ 4 } + J_{ 5 } + J_{ 6 } + J_{ 7 } + J_{ 8 } + J_{ 11 } + J_{ 12 } ) = \\
= - 6 E \bar \lambda^2 \iint\limits_{ \mathbb{C} } e^{ \frac{ \sqrt{ -E } }{ 2 } ( \bar \lambda \zeta + \frac{ \bar \zeta }{ \bar \lambda } ) } \, \partial_{ \zeta } v \, \varphi \, d \Re \zeta \, d \Im \zeta + 12 \sqrt{-E} \bar \lambda \iint\limits_{ \mathbb{C} } e^{ \frac{ \sqrt{ -E } }{ 2 } ( \bar \lambda \zeta + \frac{ \bar \zeta }{ \bar \lambda } ) } \, \partial_{ \zeta }^2 v \, \varphi \, d \Re \zeta \, d \Im \zeta - \\
 -\frac{ 6 E }{ \bar \lambda^2 } \iint\limits_{ \mathbb{C} } e^{ \frac{ \sqrt{ -E } }{ 2 } ( \bar \lambda \zeta + \frac{ \bar \zeta }{ \bar \lambda } ) } \, \partial_{ \bar \zeta  } v \, \varphi \, d \Re \zeta \, d \Im \zeta + \frac{ 12 \sqrt{-E} }{ \bar \lambda } \iint\limits_{ \mathbb{C} } e^{ \frac{ \sqrt{ -E } }{ 2 } ( \bar \lambda \zeta + \frac{ \bar \zeta }{ \bar \lambda } ) } \, \partial_{ \bar \zeta }^2 v \, \varphi \, d \Re \zeta \, d \Im \zeta.
\end{multline}
Finally,
\begin{equation}
\label{fourteen_beta}
J_{ 14 } = 3 i ( \sqrt{E} )^3 \left( \lambda^3 - \frac{ 1 }{ \lambda^3 } \right) b( \lambda, t )
\end{equation}
and thus from (\ref{sum_fourteen_beta})-(\ref{fourteen_beta}) we obtain formula (\ref{partial_concise_beta}).

\end{document}